\newcommand{\de}{\partial}          
\newcommand{\deb}{\overline\partial}
\newcommand{\ov}[1]{\overline{#1}}
\newcommand{\wi}[1]{\widetilde{#1}}
\newcommand{\hs}{\hspace{0.1em}}
\newcommand{\f}{\rightarrow}
\newcommand{\Aut}{\operatorname{Aut}}
\newcommand{\K}{K\"ahler}
\newcommand{\Ko}{Kobayashi}
\newcommand{\W}{\Omega}
\newcommand{\CH}{Cartan-Hartogs}
\newcommand{\Om}{\Omega}
\newcommand{\midd}{\ \big | \,}
\newcommand{\middd}{\ \Big | \,}
\newcommand{\midddd}{\ \bigg | \,}
\newtheorem{theorem}{Theorem}
\newtheorem{lemma}{Lemma}
\newtheorem{prop}{Proposition}
\newcommand{\C}{\mathds C}
\title{A Cartan--Hartogs version of the Polydisk Theorem}
\author{Roberto Mossa}
\address{(Roberto Mossa) Departamento de Matemática \\
 Instituto de Matemática e Estatística \\
Universidade de São Paulo (Brazil)}
	\email{robertom@ime.usp.br}
\author{Michela Zedda$^{(*)}$}
\address{(Michela Zedda - Corresponding author) Dipartimento di Matematica, Fisica e Informatica \\
          Universit\`a di Parma (Italy)}
\email{michela.zedda@unipr.it}
\date{\today}
\subjclass[2020]{32Q02, 53C40}
\keywords{Cartan--Hartogs domains, Polydisk Theorem, totally geodesic submanifolds}
\thanks{The first-named author was supported by a grant from Fapesp (2018/08971-9).
The second-named author has been financially supported by the group G.N.S.A.G.A. of I.N.d.A.M and by the PRIN project ``Real and Complex Manifolds: Topology, Geometry and holomorphic dynamics''.\\
\indent{Data sharing not applicable to this article as no datasets were generated or analyzed during the current study.}\\
\indent{(*) Corresponding author.}
}
\begin{document}
\maketitle
\begin{abstract}
We extend the Polydisk Theorem for symmetric bounded domains to Cartan--Hartogs domains, and apply it to prove that a Cartan--Hartogs domain inherits totally geodesic submanifolds from the bounded symmetric domain which is based on, and to give a characterization of Cartan--Hartogs's geodesics with linear support.
\end{abstract}

%\tableofcontents

\section{Introduction and statement of the results}

For a bounded symmetric domain $\Omega$ endowed with (a multiple of) its Bergman metric $g_B$, the celebrated Polydisk Theorem due to J. A. Wolf \cite{wolf} (see also \cite{mok}) shows that given any point $z\in \Omega$ and any direction $X\in T_z\Omega$, there exists a totally geodesic complex submanifold $\Pi$ passing through $z$ with $X\in T_z\Pi$, biholomorphically isometric to a polydisk $\Delta^r$  of dimension equals to the rank $r$ of $\Omega$. Moreover, the  group of the (isometric) automorphisms $\Aut(\Omega)$ of $\Omega$, acts transitively on the space of all such polydisks, and denoting by $\Aut_z(\Omega)$ the isotropy subgroup of $\Aut(\Omega)$ at $z$, one can realize $\Omega$ as union over $\gamma\in \Aut_z(\Omega)$ of $\gamma\cdot \Pi$.

In this paper, we prove the analogous of the Polydisk Theorem for Cartan-Hartogs domains in terms of Hartogs-Polydisk (see \eqref{hpoly} below). For $\mu>0$, Cartan--Hartogs domains are defined as the $1$-parameter family:
\begin{equation}\label{gench}
M_\Omega(\mu)=\left\{(z,w)\in\Omega\times \mathds C \midd |w|^2<N_\Omega^\mu(z,z)\right\},
\end{equation}
where $\Omega$ is a bounded symmetric domain not necessarily irreducible and $N_\Omega(z,z)$ is its generic norm. Observe that originally \cite{newclasses} the domain $\Omega$ the Cartan--Hartogs is based on is a Cartan domain, i.e. an irreducible bounded symmetric domain. Here $\Omega$ is allowed to be not irreducible, namely $\Om = \Om_1\times\cdots\times \Om_m$ is a product of the Cartan domains $\Om_1,\dots,\Om_m$, and accordingly its generic norm $N_\Omega$ is the product of the generic norms of each factor:
$$
N_\Om(z_1,\dots,z_m, z_1,\dots,  z_m)= N_{\Om_1}(z_1, z_1)\cdots N_{\Om_m}(z_m, z_m).
$$
We consider on $M_\Omega(\mu)$ the Kobayashi metric $\omega(\mu)=\frac i2\partial\bar \partial \Phi_{\Omega,\mu}$, where:
\begin{equation}\label{genphi}
\Phi_{\Omega,\mu}(z,w)=-\log\left(N_\Omega^\mu(z,z)-|w|^2\right).
\end{equation}
We say that a Cartan--Hartogs $M_{\Om}(\mu)$ domain is of \emph{classical type} if $\Om$ is product of Cartan domains of classical type.
When $\Omega$ is a polydisk $\Delta^n:=\{z\in \mathds C^n \mid |z_1|^2<1,\dots, |z_n|^2<1\}$, the associated Cartan--Hartogs is the {\em Hartogs-Polydisk}:
\begin{equation}\label{hpoly}
M_{\Delta^n}(\mu)=\left\{(z,w)\in \Delta^n\times \mathds C \middd |w|^2<\prod_{j=1}^n(1-|z_j|^2)^\mu\right\},
\end{equation}
whose Kobayashi metric is defined by the K\"ahler potential:
$$
\Phi_{\Delta^n,\mu}(z,w)=-\log\left(\prod_{j=1}^n(1-|z_j|^2)^\mu-|w|^2\right).
$$
Observe that when  $\mu=1$ and $\Omega=\mathds C{\rm H}^1$, $M_\Omega(\mu)$ reduces to be a complex hyperbolic space. In all the other cases it is a nonhomogeneous domain
 that inherits symmetric peculiarities from the symmetric bounded domain it based on. For this reason Cartan--Hartogs domains represent an important class of domains in $\mathds C^n$, and since their first apparence in \cite{newclasses} they have been studied from different points of view, see e.g. \cite{bi,feng,fengtu, LZ, LZzeith,MZ, roosKE, zedda, zeddaberezin}. 
 
The main theorem of this paper is the following Hartogs version of the Polydisk Theorem. As his classical counterpart, which led to several applications, e.g. N. Mok and S.-C. Ng's rigidity and extension results for holomorphic isometries \cite{mokgeod, mokarithm, mokholisom,mokgerm,ng} (see also \cite{LM,M1,M2} where the Polydisk Theorem is used to study the diastatic exponential and the volume and diastatic entropy of symmetric bounded domains), we expect it to be a useful tool to solve geometric problems related to Cartan--Hartogs domain, improving our knowledge of nonhomogeneous domains.
\begin{theorem}[Hartogs--Polydisk Theorem]\label{thmpoly}
Let $\Omega$ be a bounded symmetric domain of classical type of rank $r$ and let $M_\Omega(\mu)$ be the associated Cartan--Hartogs domain.
%$M_{\Omega}(\mu)$ be  a Cartan-Hartogs domain of classical type, with $\rank\left(\Omega\right)=r$. 
For any point $(z,w) \in M_ \Omega(\mu)$ and any $X \in T_{(z,w)}M_ \Omega(\mu)$ there exists a totally geodesic complex submanifold $\tilde \Pi$ through $(z,w)$ with $X \in T_{(z,w)}M_ {\tilde \Pi}(\mu)$, such that $\tilde \Pi$ is biholomorphically isometric to the Hartogs-Polydisk $M_{\Delta^r}(\mu)$.
Moreover, $\Aut(\Omega)$ acts transitively on the space of all such Hartogs-polydisks, and
$
M_{\Omega}(\mu)  = \cup \{\gamma\cdot \tilde \Pi: \gamma \in \Aut_z(\Omega)\}.
$
\end{theorem}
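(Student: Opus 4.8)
The plan is to transfer the classical Polydisk Theorem through the natural disk-bundle projection $\pi\colon M_\Omega(\mu)\to\Omega$, $(z,w)\mapsto z$, whose fibre over $z$ is the disk $\{|w|^2<N_\Omega^\mu(z,z)\}$. First I would lift automorphisms. Recall the transformation law $N_\Omega(\varphi z,\varphi z)=|J_\varphi(z)|^{2/\nu}N_\Omega(z,z)$ for $\varphi\in\Aut(\Omega)$, where $J_\varphi$ is the holomorphic Jacobian and $\nu$ the genus of $\Omega$. Setting $\tilde\varphi(z,w)=\bigl(\varphi(z),\,J_\varphi(z)^{\mu/\nu}w\bigr)$ for a fixed branch, one checks $N_\Omega^\mu(\varphi z,\varphi z)-|J_\varphi(z)^{\mu/\nu}w|^2=|J_\varphi(z)|^{2\mu/\nu}\bigl(N_\Omega^\mu(z,z)-|w|^2\bigr)$, so that $\Phi_{\Omega,\mu}\circ\tilde\varphi=\Phi_{\Omega,\mu}-\tfrac{\mu}{\nu}\log|J_\varphi|^2$ differs from $\Phi_{\Omega,\mu}$ by a pluriharmonic term; hence $\tilde\varphi$ is a holomorphic isometry of $\bigl(M_\Omega(\mu),\omega(\mu)\bigr)$. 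The circle rotations $R_\theta(z,w)=(z,e^{i\theta}w)$ are manifestly isometries as well. This embeds $\Aut(\Omega)$ (and the $S^1$-action) into $\Aut(M_\Omega(\mu))$, and since $\tilde\varphi\bigl(\pi^{-1}(S)\bigr)=\pi^{-1}(\varphi S)$ for any $S\subseteq\Omega$, it lets me transport the whole construction by equivariance; in particular it suffices to treat one standard polydisk.

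Second, I would build $\tilde\Pi$. Given $(z,w)$ and $X=(X_z,X_w)$, applying Wolf's theorem to $z\in\Omega$ and $X_z\in T_z\Omega$ yields a totally geodesic polydisk $\Pi\cong\Delta^r$ through $z$ with $X_z\in T_z\Pi$, and I set $\tilde\Pi:=\pi^{-1}(\Pi)$. As the fibre direction is always tangent, $T_{(z,w)}\tilde\Pi=T_z\Pi\oplus\C\ni X$. The decisive structural input, and the place where classical type enters, is the factorization of the generic norm along the polydisk: in the standard matrix/quadratic realizations the maximal polydisk is the linear slice of diagonal elements, and there $N_\Omega(z,z)|_{\Pi}=\prod_{j=1}^r(1-|z_j|^2)=N_{\Delta^r}(z,z)$ (e.g. $\det(I-ZZ^*)|_{\mathrm{diag}}=\prod(1-|z_j|^2)$ for type I). Consequently $\Phi_{\Omega,\mu}|_{\tilde\Pi}=\Phi_{\Delta^r,\mu}$, so $\tilde\Pi$ with the induced metric is biholomorphically isometric to the Hartogs-polydisk $M_{\Delta^r}(\mu)$.

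The main obstacle is the third step: showing $\tilde\Pi$ is totally geodesic, not merely isometrically embedded. By equivariance I may take $\Pi=\Pi_0$ the standard polydisk through the origin. I would realize $\Pi_0$ as the common fixed locus in $\Omega$ of a family of involutive automorphisms; for the classical types these are the signed diagonal conjugations $Z\mapsto EZF$ (type I), $Z\mapsto EZE$ (type III), and their analogues, whose common fixed set is exactly the diagonal slice $\Pi_0$. Each such involution $\sigma$ has constant Jacobian $J_\sigma=\pm1$ on its fixed set, so its lift $\tilde\sigma$ twists the fibre over $\mathrm{Fix}(\sigma)$ by a constant unimodular factor $\zeta_\sigma$; composing with $R_{-\arg\zeta_\sigma}$ produces an isometry $F_\sigma$ of $M_\Omega(\mu)$ fixing $\pi^{-1}(\mathrm{Fix}\,\sigma)$ pointwise, with $\mathrm{Fix}(F_\sigma)=\pi^{-1}(\mathrm{Fix}\,\sigma)$. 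Hence $\bigcap_\sigma\mathrm{Fix}(F_\sigma)=\pi^{-1}(\Pi_0)=\tilde\Pi_0$ is the fixed-point set of a group of holomorphic isometries, and is therefore a totally geodesic complex submanifold. Alternatively one can compute the second fundamental form directly in the coordinates $(z,w)$: the total geodesy of $\Pi_0\subset(\Omega,g_B)$ supplies $\partial_{\bar\beta}N|_{\Pi_0}=0$ and $\partial_a\partial_{\bar\beta}N|_{\Pi_0}=0$ for normal indices $\beta$, which forces the tangent–normal metric and its derivatives to vanish along $\tilde\Pi_0$, the $w$-direction contributing only through the elementary $w$-dependence of $N_\Omega^\mu-|w|^2$. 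I expect the bookkeeping of the fibre twist and the explicit identification of the involutions in each classical type to be the genuinely technical part.

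Finally, transitivity and the union follow formally. Since $\tilde\varphi(\pi^{-1}\Pi)=\pi^{-1}(\varphi\Pi)$ and $\Aut(\Omega)$ acts transitively on the polydisks $\Pi$ by Wolf, the lifts act transitively on the Hartogs-polydisks $\tilde\Pi$, each totally geodesic and isometric to $M_{\Delta^r}(\mu)$ by the previous steps. For the covering, given $(z',w')\in M_\Omega(\mu)$ I choose, via Wolf's union $\Omega=\bigcup_{\gamma\in\Aut_z(\Omega)}\gamma\cdot\Pi$, some $\gamma\in\Aut_z(\Omega)$ with $z'\in\gamma\cdot\Pi$; then $(z',w')\in\pi^{-1}(\gamma\cdot\Pi)=\tilde\gamma\cdot\tilde\Pi$, giving $M_\Omega(\mu)=\bigcup_{\gamma\in\Aut_z(\Omega)}\gamma\cdot\tilde\Pi$.
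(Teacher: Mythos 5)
Your proposal is correct in outline and follows the paper's outer architecture (standard polydisk at the origin, lift automorphisms to twist the fibre, propagate by equivariance, then assemble via Wolf's theorem), but it replaces the paper's computational core by a genuinely different argument for total geodesy. The paper proceeds via Lemma \ref{ki}, which uses Calabi's diastasis to show that \emph{any} K\"ahler immersion $\varphi\colon\Delta^r\to\Omega$ fixing the origin satisfies $N_\Omega(\varphi(z),\varphi(z))=N_{\Delta^r}(z,z)$, and then proves total geodesy of the lifted maps by explicit case-by-case Christoffel-symbol computations for each classical type (Lemmas \ref{lempolytot}, \ref{lempolytot2}, \ref{lempolytot3}, \ref{lempolytot4}), with your Jacobian twist $J_\varphi^{\mu/\nu}$ appearing there in the intrinsic form $\tilde\phi(z,w)=(\phi(z),e^{\mu h_\phi(z)}w)$ of Lemma \ref{lemautlift}. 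You instead obtain total geodesy of $\pi^{-1}(\Pi_0)$ as the common fixed-point set of lifted sign involutions, which is exactly the trick the paper itself uses at the base level for the type IV domain, promoted to the Hartogs level; this buys a uniform, computation-free proof across all types, at the cost of exhibiting the correct involution family in each case. Two small repairs are needed for your version to be airtight: (i) as stated, the common fixed set of \emph{all} signed conjugations $Z\mapsto EZF$ is $\{0\}$, so in type I you must restrict to pairs with $\epsilon'_j=\epsilon_j$ for $j\le m$ (leaving the signs on the extra columns free), with the analogous pairing constraint $\epsilon_{n+1-j}=\epsilon_j$ for the anti-diagonal polydisk of type II; moreover the fibre-twist correction $R_\theta$ is unnecessary, since these involutions fix the origin and hence preserve $N_\Omega$ (part (2) of Lemma \ref{lemautlift}), so they lift with untwisted fibre. (ii) The theorem allows reducible $\Omega$, where no single genus $\nu$ exists; the transformation law $N_\Omega(\varphi z,\varphi z)=|J_\varphi(z)|^{2/\nu}N_\Omega(z,z)$ must then be applied factor by factor, or replaced by the paper's pluriharmonic formulation $N(\phi(z),\overline{\phi(z)})=N(z,\bar z)\,e^{h_\phi(z)+\overline{h_\phi(z)}}$. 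With these adjustments your argument is complete, and arguably cleaner than the paper's.
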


We apply the Hartogs-Polydisk Theorem to prove the following two results. The first one states that any totally geodesic K\"ahler submanifold of the base domain $\Omega$ is a totally geodesic submanifold of its associated Cartan--Hartogs: 
\begin{theorem}\label{thmtotg} 
Let $\Omega'\subset\Omega$ be a totally geodesic \K\ submanifold of a bounded symmetric domain of classical type. Then 
$$
C_{\W'}=\left\{\left(z,w\right)\in M_{\Omega}(\mu) \mid z\in \Omega'\right\}
$$
is a totally geodesic K\"ahler submanifold of $M_{\Omega}(\mu)$ biholomorphically isometric to the \CH\ $M_{\Omega'}(\mu)$.
\end{theorem}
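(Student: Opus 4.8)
The plan is to prove the statement in two independent steps: that the natural inclusion realizes $C_{\Omega'}$ as a biholomorphic isometric copy of the \CH\ domain $M_{\Omega'}(\mu)$, and that $C_{\Omega'}$ is totally geodesic, the second step being where Theorem \ref{thmpoly} is used. For the first step I would record the metric identification. A totally geodesic \K\ submanifold of a bounded symmetric domain is again a bounded symmetric domain, and it inherits its generic norm by restriction, $N_{\Omega'}(z,z)=N_\Omega(z,z)$ for $z\in\Omega'$ (on a maximal polydisk this reads $N_\Omega=\prod_k(1-|\zeta_k|^2)^{m_k}$, which is how the multiplicities $m_k$ enter below). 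Granting this, let $\iota\colon\Omega'\hookrightarrow\Omega$ be the inclusion and set $F\colon M_{\Omega'}(\mu)\to M_\Omega(\mu)$, $F(z,w)=(\iota(z),w)$. Then $F$ is a biholomorphism onto $C_{\Omega'}$, and from \eqref{genphi} together with the norm identity one gets $F^{*}\Phi_{\Omega,\mu}=\Phi_{\Omega',\mu}$, hence $F^{*}\omega(\mu)=\frac i2\partial\overline\partial\Phi_{\Omega',\mu}$; thus $F$ is the asserted biholomorphic isometry.

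For the totally geodesic claim I would use the standard criterion: a complex submanifold $S\subseteq M$ is totally geodesic as soon as through every $p\in S$ and every $X\in T_pS$ there passes a totally geodesic submanifold $N$ of $M$ with $p\in N\subseteq S$ and $X\in T_pN$ (the $M$-geodesic in direction $X$ then equals the $N$-geodesic and stays in $S$). Fix $p=(z_0,w_0)\in C_{\Omega'}$ and $X\in T_pC_{\Omega'}$, with base $(1,0)$-part $X_0\in T^{1,0}_{z_0}\Omega'$. Applying the classical Polydisk Theorem to the bounded symmetric domain $\Omega'$ produces a maximal polydisk $P'\cong\Delta^{r'}$, $r'=\rank\Omega'$, through $z_0$ with $X_0\in T^{1,0}_{z_0}P'$ and $P'$ totally geodesic in $\Omega'$, hence in $\Omega$. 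The Hartogs-polydisk $M_{P'}(\mu)=\{(z,w)\in M_\Omega(\mu):z\in P'\}$ then contains $p$, is tangent to $X$, and lies in $C_{\Omega'}$; so it suffices to prove that each such $M_{P'}(\mu)$ is totally geodesic in $M_\Omega(\mu)$.

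To this end I would first recall that $\Aut(\Omega)$ lifts to a group of isometries of $M_\Omega(\mu)$ via $(z,w)\mapsto(\gamma z,h_\gamma(z)^{\mu}w)$, where $N_\Omega(\gamma z,\gamma z)=|h_\gamma(z)|^{2}N_\Omega(z,z)$; since an isometry carries totally geodesic submanifolds to totally geodesic submanifolds, I may move $P'$ into a fixed maximal polydisk $\Delta^{r}\subseteq\Omega$, $r=\rank\Omega$. By the structure theory behind the Polydisk Theorem, inside $\Delta^{r}$ the totally geodesic sub-polydisk $P'$ is, up to such an isometry, block-diagonal: for a partition $\{1,\dots,r\}=B_1\sqcup\cdots\sqcup B_{r'}$ it is the locus $\{z_i=z_j\ \text{whenever}\ i,j\in B_k\}$, and $N_\Omega$ restricts to $\prod_k(1-|\zeta_k|^2)^{m_k}$ with $m_k=|B_k|$. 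Now Theorem \ref{thmpoly} gives that $M_{\Delta^{r}}(\mu)$ is totally geodesic in $M_\Omega(\mu)$, while the finite group $G=\prod_k\mathrm{Sym}(B_k)$ acting on $M_{\Delta^{r}}(\mu)$ by permuting the base coordinates within each block (and fixing $w$) consists of isometries, because the potential $-\log\big(\prod_{j}(1-|z_j|^2)^{\mu}-|w|^2\big)$ is symmetric in the $z_j$. Since $M_{P'}(\mu)=\mathrm{Fix}(G)$ and the fixed-point set of a group of isometries is totally geodesic, $M_{P'}(\mu)$ is totally geodesic in $M_{\Delta^{r}}(\mu)$, hence in $M_\Omega(\mu)$ by transitivity, which completes the argument.

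The main obstacle I expect is precisely the totally geodesic part for non-rank-preserving embeddings, where $r'<r$ and $N_\Omega$ restricts to $P'$ with nontrivial multiplicities $m_k$: identifying $P'$ with a block-diagonal sub-polydisk of $\Delta^{r}$ and recognizing $M_{P'}(\mu)$ as the fixed-point set $\mathrm{Fix}(G)$ is the delicate point, together with the companion verification that the generic norm inherited by $\Omega'$ is exactly $N_\Omega|_{\Omega'}$, which is what makes the target the \CH\ domain $M_{\Omega'}(\mu)$ for the same parameter $\mu$.
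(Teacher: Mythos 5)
Your overall skeleton matches the paper's proof: step 1 (the norm identity $N_{\Omega'}=N_\Omega|_{\Omega'}$ giving the biholomorphic isometry $C_{\Omega'}\simeq M_{\Omega'}(\mu)$) is exactly the paper's first paragraph, and your reduction via the classical Polydisk Theorem applied to $\Omega'$ to the claim that $M_{P'}(\mu)$ is totally geodesic in $M_\Omega(\mu)$ is the paper's strategy as well (there carried out through Proposition \ref{corpolgto} and Lemma \ref{lemtotghp}). The gap is in your normal form for $P'$ inside the maximal polydisk $\Delta^r$. Since $\Omega'$ is a K\"ahler, i.e.\ \emph{isometric}, totally geodesic submanifold for the hyperbolic metrics, its maximal polydisk $P'$ is isometrically embedded in $\Omega$; but a diagonal block $\{z_i=z_j,\ i,j\in B_k\}$ with $m_k=|B_k|\geq 2$ carries the induced metric $m_k\,\omega_{hyp}^{\Delta}$, with holomorphic sectional curvature scaled by $1/m_k$, so it cannot be the isometric image of a factor of $P'$ --- this curvature obstruction is precisely what the first part of the paper's proof of Lemma \ref{lemtotghp} exploits to show each factor goes into a single coordinate. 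Hence under the theorem's hypotheses all blocks are singletons, and when $r'<r$ the correct normal form is the coordinate sub-polydisk $\{z_{r'+1}=\dots=z_r=0\}$, which your partition $\{1,\dots,r\}=B_1\sqcup\cdots\sqcup B_{r'}$ into $r'$ blocks never produces. Your multiplicity picture also contradicts your own step 1: if some $m_k\geq 2$, then $N_\Omega$ restricts on $P'$ to $\prod_k(1-|\zeta_k|^2)^{m_k}$ and $F^*\Phi_{\Omega,\mu}$ is no longer $\Phi_{\Omega',\mu}$ for the \emph{same} $\mu$, so the claimed isometry with $M_{\Omega'}(\mu)$ would fail.

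As a consequence, the fixed-point argument as written proves the wrong statement: in the case that actually occurs (singleton blocks, trailing coordinates zero) your group $G=\prod_k\mathrm{Sym}(B_k)$ is trivial, so $\mathrm{Fix}(G)=M_{\Delta^r}(\mu)$ and nothing is shown, while in the case where $G$ is nontrivial (genuine diagonal blocks) the submanifold is not the one the theorem concerns. The repair stays entirely within your method: take $G$ generated by the reflections $(z,w)\mapsto(z_1,\dots,z_{r'},-z_{r'+1},\dots,-z_r,w)$ (or rotations $z_i\mapsto\lambda z_i$, $|\lambda|=1$, for $i>r'$), which are isometries of $M_{\Delta^r}(\mu)$ because the potential $-\log\bigl(\prod_{j}(1-|z_j|^2)^{\mu}-|w|^2\bigr)$ depends only on the $|z_j|^2$; then $\mathrm{Fix}(G)$ is exactly the Hartogs-polydisk over the coordinate sub-polydisk. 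This corrected version recovers the paper's Lemma \ref{lemtotghp}, and is in fact a cleaner substitute for the paper's direct Christoffel-symbol computation there. Finally, your assertion that $P'$ can be moved into a fixed maximal polydisk of $\Omega$ deserves justification (extension of the corresponding orthogonal primitive tripotents to a frame in the Jordan triple picture), though the paper's own treatment of this containment, via the curvature argument of Lemma \ref{lemtotghp}, is at a comparable level of detail.
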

The second one gives a characterization of geodesics with linear support in $M_{\Omega}(\mu)$:

\begin{theorem}\label{autlingeo}
If $M_\Omega(\mu)$ admits a geodesic with linear support passing through $(\zeta,0)$, then up to automorphisms either the geodesic is contained in $\Omega= M_\Omega(\mu)\cap \{w=0\}$ or in $\mathds C{\rm H}^1=M_\Omega(\mu)\cap \{z=0\}$, or $M_\Omega(\mu)\simeq\mathds C{\rm H}^{d+1}$.
\end{theorem}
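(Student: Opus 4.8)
The plan is to combine the Hartogs--Polydisk Theorem with a direct analysis of the geodesic equation at the base point $(\zeta,0)$, where the vanishing of the $w$--coordinate makes the Kobayashi metric block-diagonal and the Christoffel symbols explicit. First I would record the working form of the hypothesis: a curve whose image lies on the complex line $(\zeta,0)+\lambda v$ is a geodesic precisely when $\Gamma^k_{ij}v^iv^j=c\,v^k$ holds at every point of the line, for a scalar $c$ absorbing the reparametrization. Theorem~\ref{thmpoly} guarantees that the geodesic is contained in a totally geodesic Hartogs-polydisk $\tilde\Pi\simeq M_{\Delta^r}(\mu)$ tangent to its initial velocity $X=(X_z,X_w)$, and Theorem~\ref{thmtotg} makes available the totally geodesic sub-\CH\ domains $C_{\Omega'}$; I would use these structurally, while carrying out the quantitative computation in the original coordinates of $M_\Omega(\mu)$ at $(\zeta,0)$.

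The two degenerate directions are immediate from linear support alone. If $X_w=0$ the supporting line lies in $\{w=0\}=\Omega$, so the geodesic is contained in $\Omega$; if $X_z=0$ the line lies in the fibre $\{z=\zeta\}$, and applying the automorphism induced by an element of $\Aut(\Omega)$ sending $\zeta\mapsto 0$ carries the geodesic into $\{z=0\}=\C{\rm H}^1$. This settles the first two alternatives, and the real content is the \emph{mixed} case $X_z\neq 0\neq X_w$.

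For the mixed case I would compute the relevant Christoffel symbols of $\omega(\mu)$ at $(\zeta,0)$. Writing $N=N_\Omega$ and using $w=0$, the metric splits as the $(z,\bar z)$-block $-\mu\,\partial\bar\partial\log N$ (a multiple of the metric of $\Omega$) together with the entry $N^{-\mu}$ in $(w,\bar w)$, all cross terms vanishing; consequently the off-type Christoffels drop out and one is left with $\Gamma^{w}_{z_iw}\big|_{(\zeta,0)}=-\mu\,\partial_{z_i}\log N(\zeta)$, while the $z$-block reduces to the Christoffel symbols of $\Omega$. Imposing $\Gamma^k_{ij}v^iv^j=c\,v^k$ then gives two compatibility relations: the $w$-line yields $c=-2\mu\,\partial_{X_z}\log N(\zeta)$, whereas the $z$-lines force $X_z$ to be a linear-support geodesic direction of $\Omega$ with its own constant $c_\Omega(\zeta,X_z)$. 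Equating the two expressions for $c$ produces $c_\Omega(\zeta,X_z)=-2\mu\,\partial_{X_z}\log N(\zeta)$, which, evaluated on the linear geodesics of $\Omega$ supplied by the classical Polydisk Theorem, collapses to $\mu=1$ whenever $\zeta\neq 0$.

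Finally I would promote this pointwise information to a global statement. The single-point identity is only necessary: a genuine linear-support geodesic must satisfy $\Gamma^k_{ij}v^iv^j\parallel v$ along the \emph{entire} line, and it is this along-the-line requirement that eliminates the higher rank. Confining $X_z$, via Theorem~\ref{thmtotg}, to a rank-one totally geodesic factor $C_{\Omega'}\simeq M_{\C{\rm H}^1}(\mu)$ and tracking the equation off $w=0$, one finds that a mixed geodesic survives only when its support is a diameter through the centre of that rank-one slice; over $M_{\Delta^r}(\mu)$ this is compatible with passing through $(\zeta,0)$, $\zeta\neq0$, only if $r=1$. Together with $\mu=1$ this forces $\Omega=\C{\rm H}^d$ and hence $M_\Omega(\mu)\simeq\C{\rm H}^{d+1}$. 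I expect the main obstacle to be exactly this last step: the single-point identity is satisfied by spurious mixed directions in rank $\geq 2$ (as one checks on the disk factors when $\mu=1$), so the argument must control the geodesic along its whole support and extract the global rank-one, hence ball, conclusion; the delicate point is that linear support is destroyed by the very automorphisms one would use to move $\zeta$ to the origin, so this control has to be exercised directly in $M_\Omega(\mu)$ rather than after reduction to the centre.
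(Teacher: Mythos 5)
Your skeleton runs parallel to the paper's (reduce to a Hartogs--polydisk via Theorem \ref{thmpoly}, dispose of the two degenerate directions, then rule out mixed directions except for $\mu=r=1$), but the quantitative core of your mixed case contains a genuine gap: the pointwise identity $\Gamma^k_{ij}v^iv^j=c\,v^k$ at the single point $(\zeta,0)$ does \emph{not} ``collapse to $\mu=1$ whenever $\zeta\neq0$''. Your Christoffel computations at $w=0$ are correct ($\Gamma^{w}_{z_iw}=-\mu\,\partial_{z_i}\log N$, the $z$-block reducing to $\Omega$), but the resulting one-point compatibility condition admits mixed solutions for every $\mu$. Two counterexamples: in the ball, at any $\zeta\neq0$ choose $X_z\neq0$ with $\sum_j\bar\zeta_jX_j=0$; then $\Gamma^\Omega(X_z,X_z)=\frac{2(\sum_j\bar\zeta_jX_j)}{1-|\zeta|^2}\,X_z=0$ and $\partial_{X_z}\log N(\zeta)=0$, so the identity holds with $c=0$ for every $\mu$ and every $X_w\neq0$. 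In the bidisk with $\mu=\tfrac12$, at $\zeta$ with $\zeta_1\zeta_2\neq0$ take $X_k=\frac{1-|\zeta_k|^2}{2\bar\zeta_k}$ and any $X_w\neq0$: each $z_k$-equation gives $c=1$ while the $w$-equation gives $2\mu c=c$, satisfied exactly because $r\mu=1$. So first-order data at one point can never detect $\mu$; it at best shadows the constraint $r\mu=1$. Compounding this, the case $\zeta=0$ --- where \emph{all} Christoffel symbols vanish and your pointwise identity is vacuous for every direction --- is not treated at all, although after reduction it is the entire content of the theorem; and the step ``confine $X_z$ to a rank-one totally geodesic factor $C_{\Omega'}$'' is unavailable for generic $X_z$, which spans a rank-$r$ polydisk direction.

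The paper's proof supplies precisely the along-the-curve computation that your plan defers to ``tracking the equation off $w=0$, one finds\dots'' and never performs. It reduces to $\zeta=0$ by Lemma \ref{lemautlift}, confines $\gamma$ to $M_{\Delta^r}(\mu)$ by Theorem \ref{thmpoly}, writes $\gamma(t)=\xi\,v(t)$, and derives the geodesic equations not from Christoffel symbols but from the explicit K\"ahler immersion $f\colon M_{\Delta^r}(\mu)\to l^2(\C)$ of Lemma \ref{kahler}, via $\langle f(\gamma)'',\partial_jf\rangle=0$ (equations \eqref{gammaw}--\eqref{gammas}). Taylor expansion at $t=0$ then gives successively $\ddot v(0)=0$; at the next order \eqref{derprima}, hence $\mu\sum_j|\xi_j|^2=|\xi_s|^2$ for all $s$ and so $r\mu=1$; and only at third order does the comparison of \eqref{finalgeo1} with \eqref{finalgeo2} kill the factor $36(\mu-1)|\xi_s|^4$ and force $\mu=r=1$, i.e.\ $M_\Omega(\mu)\simeq\C{\rm H}^{d+1}$. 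Nothing in your proposal plays the role of this third-order jet computation, and the counterexamples above show it cannot be circumvented by pointwise arguments. Your closing observation --- that the lifted automorphisms moving $\zeta$ to $0$ are not affine and so do not preserve linear support --- does flag a real subtlety in the reduction step (it is what the ``up to automorphisms'' in the statement, and the hypothesis of Lemma \ref{xizero}, are absorbing), but raising it does not substitute for the missing analysis: your alternative of working directly at $(\zeta,0)$ founders exactly where the paper's expansion does the work.
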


The paper is organized as follows. In the next section we recall basic facts about classical Cartan domains and we describe explicit polydisks totally geodesically embedded. In Section \ref{proof} we show how the totally geodesic K\"ahler immersions of such polydisks into the Cartan domains lift to totally geodesic K\"ahler immersion of Hartogs--polydisks into Cartan--Hartogs domains and prove Theorem \ref{thmpoly}. The last three sections are devoted respectively to the proofs of theorems \ref{thmtotg} and \ref{autlingeo}.

\section{Explicit polydisks in Cartan domains}\label{cartan}

In this section we are going to give an explicit totally geodesic K\"ahler (i.e. holomorphic and isometric) immersion of a polydisk into each one of the four irreducible classical domains. All the isometries here are intended respect to the hyperbolic metric on $\Omega$, i.e. $\omega_{hyp}^\Omega:=-\partial\bar\partial\log N_\Omega(z,z)$ (one has $\omega^\Omega_B=\gamma\omega^\Omega_{hyp}$, where $\omega_B^\Omega$ is the Bergman metric on $\Omega$ and $\gamma$ is its genus). Throughout this section we use the Jordan triple system theory, referring the reader  to \cite{loiscala,LM,lmz,bisy,loos,M1,M2,M3,MZ,roos} for details and further applications.

\subsection{Cartan domain of the first type}\label{firstdomain}
Consider the first Cartan domain of rank $r=m$ and genus $\gamma=n+m$:
$$
\Omega_1[m,n] = \left\{ Z \in M_{m,n}(\C) \mid \det \left(I_m - Z Z^* \right) > 0,\, n\geq m  \right\}.
$$
Its generic norm is given by:
\begin{equation}\label{genericnorm1}
N_{\Omega_{1}}(Z, Z)={\det}\left(I_{m}-Z Z^{*}\right).
\end{equation}
A totally geodesic polydisk $\Delta^m \xhookrightarrow{\varphi} \Omega_1[m,n]$ is given by
\begin{equation}\label{phizeta1}
{\varphi}(z_1,\dots,z_m) = \operatorname {diag}(z_1, \dots, z_m){=\begin{pmatrix}z_1&&&0\\ &\ddots&&\\ &&z_m&0\end{pmatrix}}.
\end{equation}
Since $\det(I_m-\varphi(z)\varphi(z)^*)=\prod_{j=1}^m(1-|z_j|^2)$, $\varphi$ is clearly a \K\ immersion. Moreover it is easy to check that $\varphi_*\left(T_0\Delta^m\right)$ define a sub-HJPTS of $\left(T_0\Omega_1[{m,n}], \left\{,,\right\}\right)$, where 
\begin{equation}\label{eqtriprod}\begin{split} 
 \left\{U,V,W\right\}=UV^*W+WV^*U
\end{split}\end{equation}
 (see e.g. \cite[(16)]{loiscala}), we conclude, by the one to one correspondence between sub-HJPTS e sub-HSSNT (see \cite[Proposition 2.1]{loiscala}), that $\varphi$ is totally geodesic. Recent application

 \subsection{Cartan domain of the second type}\label{seconddomain}
Consider the second Cartan domain of rank $r=[n/2]$ and genus {$\gamma=2n+2$},
$$
\Omega_{2}[n]=\left\{Z \in M_{n}(\mathbb{C}), Z=-Z^{T}, \det (I_{n}-Z Z^{*})>0\right\}.
$$
A parametrization is given by:
$$
u= (u_{1\,2},\dots,u_{1\,n},u_{2\,3},\dots,u_{2\,n}\dots,u_{n-1\, n} ) \mapsto Z(u) = \left(\begin{smallmatrix}
 0            &  u_{1\,2}  & u_{1\,3} & \dots  &  u_{1\, n-1} & u_{1\,n} \\
 -u_{1\,2}  &  0           & u_{2\,3} & \dots  &  u_{2\, n-1} & u_{2\,n} \\
 \vdots    & \vdots    & \vdots  & \vdots & \vdots           &  \vdots  \\
 -u_{1\,n}  & -u_{2\,n}  &-u_{3\,n} & \dots  & -u_{n-1\, n} & 0 .     
\end{smallmatrix}\right)
$$
Its generic norm is given by:
\begin{equation}\label{genericnorm2}
N_{\Omega_{2}}(u, u)={\det}^{1/2}\left(I_{n}-Z(u) Z^{*}(u)\right).
\end{equation}

A totally geodesic polydisk $\Delta^{\left[\frac n2\right]} \xhookrightarrow{\varphi} \Omega_2[n]$ is given by:
\begin{equation}\label{phi2}
{\varphi}(u) = \begin{pmatrix}
 0            &  0   & \dots  &  0 & u_{1\,  \left[\frac n2\right]} \\
 0  &  0            & \dots  &  u_{2\, \left[\frac n2\right]-1} & 0 \\
 \vdots    & \vdots      & \vdots & \vdots           &  \vdots  \\
 0  & -u_{2\, \left[\frac n2\right]-1}  & \cdots & 0 & 0  \\          
 -u_{1\,  \left[\frac n2\right]}  & 0   & \cdots & 0 & 0             
\end{pmatrix},
\end{equation}
where $u=\left(u_{1\,  \left[\frac n2\right]},u_{2\,  \left[\frac n2\right]-1},\dots, u_{ \left[\frac n2\right]\, 1}\right)$. Since
$$
N_{\Omega_{2}}(\varphi(u), \varphi(u))={\det}^{1/2}\left(I_{n}-\varphi(u) \varphi^{*}(u)\right)=\prod_{j=1}^{ \left[\frac n2\right]}(1-|u_{j\, [\frac n2-j+1]}|^2),
$$
 $\varphi$ is a \K\ immersion, moreover it is easy to check that $\varphi_*\left(T_0\Delta_m\right)$ defines a sub-HJPTS of $\left(T_0\Omega_{2}[n], \left\{,,\right\}\right)$, where the triple product is given by $\left\{U,V,W\right\}=UV^*W+WV^*U$, namely the restriction to $T_0\Omega_{2}[n]$ of the triple product of $T_0\Omega_1[n,n]$ given in \eqref{eqtriprod}, we conclude, by the one to one correspondence between sub-HJPTS e sub-HSSNT (see \cite[Proposition 2.1]{loiscala}), that $\varphi$ is totally geodesic.

  %%%%%%%%%%% %%%%%%%%%%% %%%%%%%%%%% %%%%%%%%%%%
\subsection{Cartan domain of the third type}\label{thirddomain}
 Consider the Cartan domain of third type of rank $r=m$ and genus $\gamma=n+1$:
 \begin{equation*}
\Omega_{3}[m]=\left\{Z \in M_{m}(\mathbb{C}) \mid Z=Z^{T},\, \det (I_{m}-Z Z^{*})>0\right\},
\end{equation*}
whose generic norm is given by: 
\begin{equation}\label{genericnorm3}
N_{\Omega_{3}}(z, z)={\det}\left(I_{m}-Z Z^{*}\right).
\end{equation}
As can be proven in a totally similar way as done for the first and second type domains, a totally geodesic polydisk $\Delta^{m} \xhookrightarrow{\varphi} \Omega_1[m]$ is given by:
\begin{equation}\label{phizeta3}
{\varphi}(z) = \operatorname {diag}(z_1, \dots, z_m){=\begin{pmatrix}z_1&&\\ &\ddots&\\ &&z_m\end{pmatrix}}.
\end{equation}

\subsection{Cartan domain of the fourth type}\label{fourthdomain}
Consider the fourth type domain of rank $r=2$ and genus $\gamma=n$:
$$
\Omega_{4}[n]=\left\{z=\left(z_{1}, \dots, z_{n}\right) \in \mathbb{C}^{n} \midddd  \sum_{j=1}^{n}\left|z_{j}\right|^{2}<1,\, 1+\left|\sum_{j=1}^{n} z_{j}^{2}\right|^{2}-2 \sum_{j=1}^{n}\left|z_{j}\right|^{2}>0, \, n \geq 5\right\}.
$$
whose generic norm is given by:
\begin{equation}\label{genericnorm4}
N_{\Omega_{4}[n]}(z, z)=1+\left|\sum_{j=1}^{n} z_{j}^{2}\right|^{2}-2 \sum_{j=1}^{n}\left|z_{j}\right|^{2}.
\end{equation}
 Let $\varphi:\Delta^2\rightarrow \Omega_4[n]$ be the map: 
\begin{equation}\label{phizeta4}
\varphi(z_1,z_2)=\left(\frac{1}{2}\left(z_1+z_2\right),\frac{i}{2}\left(z_1-z_2\right) ,0,\dots,0 \right).
\end{equation}
Since:
$$
N_{\Omega_{4}[n]}(\varphi(z_1,z_2),\varphi(z_1,z_2))=(1-|z_1|^2)(1-|z_2|^2)=N_{\Delta^2}(z_1,z_2),
$$
$\varphi$ is K\"ahler. Moreover $\varphi\left(\Delta^2\right)$ is the set of points of $\Omega_{4}[n]$ fixed by the isometry $\left(z_1,\dots,z_n\right) \mapsto \left(z_1,z_2,-z_3\dots,-z_n\right)$, thus $\varphi$ is totally geodesic.

 \section{The Polydisk Theorem for Cartan-Hartogs domains}\label{proof}
Let us begin with the following lemma.
\begin{lemma}\label{ki}
Let  $\Omega$ be a Cartan domain and let $\varphi\!:\Delta^r\rightarrow \Omega$ be a K\"ahler immersion fixing the origin, i.e. a holomorphic map satisfying $\varphi^*\omega_{hyp}^\Omega=\omega_{hyp}^{\Delta^r}$ and $\varphi(0)=0$. Then:
$$
f\!:M_{\Delta^r}(\mu)\rightarrow M_\Omega(\mu), \quad f(z,w)=(\varphi(z),w),
$$
is a K\"ahler immersion.
\end{lemma}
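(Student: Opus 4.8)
The crux of the matter is the generic-norm identity
\[
N_\Omega(\varphi(z),\varphi(z)) = N_{\Delta^r}(z,z) = \prod_{j=1}^r\bigl(1-|z_j|^2\bigr),
\]
which I will denote $(\star)$; once it is in hand, every assertion about $f$ follows from a single computation of the pulled-back Kähler potential. The plan is therefore to first extract $(\star)$ from the Kähler-immersion hypothesis on $\varphi$, and then to read off well-definedness, holomorphicity, the immersion property, and the isometry directly from it.

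To prove $(\star)$, I observe that the hypothesis $\varphi^*\omega_{hyp}^\Omega=\omega_{hyp}^{\Delta^r}$ says precisely that
\[
\partial\bar\partial\bigl[\log N_\Omega(\varphi(z),\varphi(z)) - \log N_{\Delta^r}(z,z)\bigr]=0,
\]
so the bracketed real-analytic function is pluriharmonic on the simply connected domain $\Delta^r$. Both $N_\Omega(\varphi(z),\varphi(z))$ and $N_{\Delta^r}(z,z)$ are restrictions to the diagonal of functions holomorphic in $z$ and anti-holomorphic in the conjugate variable; polarizing, I can write the difference of their logarithms as $h(z)+\overline{h(\zeta)}$ for a holomorphic $h$, as an identity between these bi-holomorphic extensions. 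Setting $\zeta=0$ and using $\varphi(0)=0$ together with the normalization $N_\Omega(\cdot,0)=N_{\Delta^r}(\cdot,0)=1$ of the generic norms forces $h$ to be an imaginary constant, whence the difference vanishes identically and $(\star)$ holds on the diagonal. Equivalently, since $-\log N_\Omega(z,z)$ and $-\log N_{\Delta^r}(z,z)$ are the Calabi diastasis functions of the two hyperbolic metrics centered at the respective origins, $(\star)$ is exactly the hereditary property of the diastasis applied to the Kähler immersion $\varphi$ with $\varphi(0)=0$.

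With $(\star)$ established, the remaining steps are immediate. For $(z,w)\in M_{\Delta^r}(\mu)$ one has $|w|^2<N_{\Delta^r}^\mu(z,z)=N_\Omega^\mu(\varphi(z),\varphi(z))$, so $f(z,w)=(\varphi(z),w)\in M_\Omega(\mu)$ and $f$ is well defined; it is holomorphic because $\varphi$ is and the last coordinate is the identity. Pulling back the potential and using $(\star)$ gives
\[
f^*\Phi_{\Omega,\mu}(z,w)=-\log\bigl(N_\Omega^\mu(\varphi(z),\varphi(z))-|w|^2\bigr)=-\log\bigl(N_{\Delta^r}^\mu(z,z)-|w|^2\bigr)=\Phi_{\Delta^r,\mu}(z,w),
\]
and since holomorphic pullback commutes with $\partial\bar\partial$, this yields $f^*\omega(\mu)=\frac i2\partial\bar\partial\Phi_{\Delta^r,\mu}=\omega(\mu)$, so $f$ is isometric. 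Finally, $f$ is automatically an immersion: were $df$ to have a kernel, the pulled-back form $f^*\omega(\mu)$ would be degenerate, contradicting that it equals the nondegenerate metric $\omega(\mu)$ of $M_{\Delta^r}(\mu)$.

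The only genuinely delicate point is the passage from the infinitesimal identity of metrics to the global norm identity $(\star)$: one must justify the polarization step, namely the uniqueness of the holomorphic extension of a real-analytic function off the diagonal, and correctly absorb the additive pluriharmonic ambiguity by means of the base point $\varphi(0)=0$. Everything after $(\star)$ is formal.
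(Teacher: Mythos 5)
Your proof is correct and takes essentially the same route as the paper: both reduce everything to the generic-norm identity $N_\Omega(\varphi(z),\varphi(z))=N_{\Delta^r}(z,z)$ obtained from Calabi's diastasis rigidity, the only difference being that the paper cites the hereditary property of the diastasis (via \cite{LZ} and \cite{Cal}, working with $-\log(V(\Omega)N_\Omega)$ and eliminating the volume constants by evaluating at the origin), whereas you reprove that step self-containedly by polarization, using $N_\Omega(\cdot,0)=N_{\Delta^r}(\cdot,0)=1$ to kill the pluriharmonic ambiguity. The concluding pullback computation of $\Phi_{\Omega,\mu}$ is identical to the paper's, and your explicit checks of well-definedness and of the immersion property (nondegeneracy of $f^*\omega(\mu)$) are correct, merely left implicit there.
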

\begin{proof}
Observe that $-\log(V(\Omega) N_\Omega(z,z))$ and $-\log(V(\Delta^r) N_{\Delta^r}(z,z))$ are the diastasis functions respectively for $(\Omega,\omega_{hyp}^\Omega)$ and $(\Delta^r,\omega_{hyp}^{\Delta^r})$, where $V(\Omega)$ (resp. $V(\Delta^r)$) is the total volume of $\Omega$ (resp. $\Delta^r$) with respect to the Euclidean measure of the ambient complex Euclidean space (see \cite[Prop. 7]{LZ}). Since the diastasis is a K\"ahler potential invariant by isometries (see \cite{Cal} or also \cite{LZbook}), one has:
$$
V(\Omega) N_\Omega(\varphi(z),\varphi(z))=V(\Delta^r)N_{\Delta^r}(z,z).
$$
Since $\varphi(0)=0$ and $N_\Omega(0,0)=1$ for any $\Omega$, we get $V(\Omega) =V(\Delta^r)$ and thus
 $N_\Omega(\varphi(z),\varphi(z))=N_{\Delta^r}(z,z)$. Then, it follows easily that:
$$
\Phi_{\Omega,\mu}(f(z,w))=-\log\left(N_\Omega((\varphi(z),\varphi(z))-|w|^2\right)=-\log\left(N_{\Delta^r}(z,z)-|w|^2\right)=\Phi_{\Delta^r,\mu}(z,w),
$$
and we are done.
\end{proof}

By this lemma the totally geodesic K\"ahler immersions described in the previous section induce K\"ahler immersions of Hartogs--polydisks into Cartan--Hartogs domains. We prove now case by case that such maps are also totally geodesics.

\subsection{Cartan--Hartogs domain of the first type}
By \eqref{gench}, \eqref{genphi} and \eqref{genericnorm1}, the Cartan-Hartogs domain associated to a first type Cartan domain is:
$$
M_{\Omega_1[m,n]}(\mu)=\left\{(z,w)\in \Omega_1[m,n]\times \mathds C \mid  |w|^2<{\det}^\mu\left(I_{m}-Z Z^{*}\right)\right\}.
$$
and its Kobayashi metric is described by the K\"ahler potential:
$$
\Phi_{\Omega_1,\mu}(z)=-\log\left({\det}^\mu\left(I_{m}-Z Z^{*}\right)-|w|^2\right).
$$
\begin{lemma}\label{lempolytot}
Let $\varphi\!:\Delta^m\rightarrow \Omega_1[m,n]$ be the map in \eqref{phizeta1}. Then $f: M_{\Delta^m}(\mu) \rightarrow M_{\Omega_1[m,n]}(\mu)$, $f(z,w) = (\varphi(z) , w)$,
is a totally geodesic K\"ahler immersion.
\end{lemma}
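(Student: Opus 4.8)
The plan is to build on Lemma \ref{ki}, which already guarantees that $f$ is a K\"ahler immersion; what remains is to prove that its image $f\bigl(M_{\Delta^m}(\mu)\bigr)$ is totally geodesic in $M_{\Omega_1[m,n]}(\mu)$. For this I would realize the image as the common fixed-point set of a family of holomorphic isometries of $M_{\Omega_1[m,n]}(\mu)$ and then invoke the classical fact that the fixed-point set of any family of isometries is a totally geodesic submanifold. This is exactly the mechanism used for the fourth type domain in Subsection \ref{fourthdomain}, now lifted from the base Cartan domain to its Cartan--Hartogs domain.

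First I would produce enough isometries. Writing $Z=[\,Z'\mid Z''\,]$ with $Z'\in M_m(\C)$ the first $m$ columns and $Z''\in M_{m,n-m}(\C)$ the remaining ones, consider for each diagonal unitary $U\in U(m)$ and each diagonal unitary $V_4\in U(n-m)$ the map $\sigma_{U,V_4}(Z)=U Z V$, where $V=\operatorname{diag}(\bar U,V_4)\in U(n)$. Since $VV^*=I_n$ one computes $N_{\Omega_1}(\sigma_{U,V_4}(Z),\sigma_{U,V_4}(Z))=\det(I_m-UZVV^*Z^*U^*)=\det(I_m-ZZ^*)=N_{\Omega_1}(Z,Z)$, so $\sigma_{U,V_4}$ is a biholomorphism of $\Omega_1[m,n]$ preserving the generic norm. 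The key observation is then that its lift $\Psi_{U,V_4}(z,w)=(\sigma_{U,V_4}(z),w)$ leaves the potential invariant, $\Phi_{\Omega_1,\mu}\circ\Psi_{U,V_4}=\Phi_{\Omega_1,\mu}$, because $\Phi_{\Omega_1,\mu}$ depends on $z$ only through $N_{\Omega_1}(z,z)$ and fixes $w$; being holomorphic, $\Psi_{U,V_4}$ is therefore a holomorphic isometry of $\bigl(M_{\Omega_1[m,n]}(\mu),\omega(\mu)\bigr)$.

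Next I would identify the common fixed-point set. A matrix $Z$ is fixed by every $\sigma_{U,V_4}$ precisely when $U Z'\bar U=Z'$ for all diagonal $U\in U(m)$ and $U Z'' V_4=Z''$ for all such $U,V_4$; the first condition forces $Z'$ to be diagonal and the second forces $Z''=0$, so the common fixed locus in $\Omega_1[m,n]$ is exactly $\varphi(\Delta^m)=\{\operatorname{diag}(z_1,\dots,z_m)\}$. Consequently the common fixed-point set of the isometries $\Psi_{U,V_4}$ in $M_{\Omega_1[m,n]}(\mu)$ is $\{(\varphi(\zeta),w):|w|^2<N_{\Omega_1}^\mu(\varphi(\zeta),\varphi(\zeta))\}$, which is precisely $f\bigl(M_{\Delta^m}(\mu)\bigr)$. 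Being the fixed-point set of a family of isometries it is totally geodesic, and combined with Lemma \ref{ki} this shows that $f$ is a totally geodesic K\"ahler immersion.

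The genuinely delicate points, on which I would concentrate, are twofold: that the lift $\Psi_{U,V_4}$ is isometric, and that the common fixed locus is \emph{exactly} the diagonal polydisk and nothing larger. The first hinges on $\sigma_{U,V_4}$ preserving $N_{\Omega_1}$ as a function, not merely up to a pluriharmonic term, so that the identity $\Phi_{\Omega_1,\mu}\circ\Psi_{U,V_4}=\Phi_{\Omega_1,\mu}$ is exact; this is immediate from $VV^*=I_n$. The second is where the coupling $V=\operatorname{diag}(\bar U,V_4)$ is essential, since conjugation by $U$ on the square block $Z'$ kills its off-diagonal entries while the free factor $V_4$ kills the rectangular block $Z''$, leaving precisely $\varphi(\Delta^m)$. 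Once these are in place no further computation is needed, the classical totally-geodesic fixed-point theorem doing the rest, and the same argument transfers verbatim to the second and third type domains, and via the explicit involution already to the fourth.
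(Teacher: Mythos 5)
Your proposal is correct, but it takes a genuinely different route from the paper. The paper's proof of Lemma \ref{lempolytot} is computational: after invoking Lemma \ref{ki} (as you do), it writes out the first, second and third order derivatives of the potential $\Phi_{\Omega_1,\mu}$, evaluates them along $Z=\operatorname{diag}(z_{11},\dots,z_{mm})$, and checks directly that the Christoffel symbols $\Gamma^{sr}_{jj\hs kk}$ and $\Gamma^{sr}_{0\hs kk}$ vanish for $s\neq r$, which is exactly the condition that $\nabla_{\de_{z_{jj}}}\de_{z_{kk}}$ and $\nabla_{\de_w}\de_{z_{kk}}$ stay tangent to the image. You instead realize $f\bigl(M_{\Delta^m}(\mu)\bigr)$ as the common fixed-point set of the torus of holomorphic isometries $\Psi_{U,V_4}(z,w)=(UzV,w)$, $V=\operatorname{diag}(\bar U,V_4)$, and your two delicate points check out: $\sigma_{U,V}$ fixes the origin and preserves $N_{\Omega_1}$ \emph{exactly} (so $\Phi_{\Omega_1,\mu}\circ\Psi_{U,V_4}=\Phi_{\Omega_1,\mu}$ on the nose, consistent with part (2) of Lemma \ref{lemautlift}), and the coupled action $UZ'\bar U$, $UZ''V_4$ cuts out precisely the diagonal polydisk of \eqref{phizeta1}, with the $w$-fiber untouched, so the fixed locus is exactly the image and the classical fixed-point theorem applies. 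This is the mechanism the paper uses only at the level of the base domain $\Omega_4$ in Section \ref{fourthdomain} — and notably \emph{not} for the Hartogs lift, where Lemma \ref{lempolytot4} reverts to Christoffel computations; your observation that the isometries lift to $M_{\Omega_1[m,n]}(\mu)$ with $w$ fixed pushes the fixed-point argument to the Hartogs level and eliminates the computation entirely. What each buys: the paper's calculation is self-contained and produces formulas reused in the type II case, while your argument is shorter, conceptually transparent, and portable — though ``verbatim'' transfer is slightly optimistic: for type II one must couple the angles as $\theta_j=-\theta_{n+1-j}$ (generic such choices fix exactly the anti-diagonal of \eqref{phi2}), and for type III diagonal unitary conjugation $Z\mapsto UZU^T$ rescales the diagonal entries by $e^{2i\theta_j}$, so there one should instead use the finite family of sign matrices $U=\operatorname{diag}(\epsilon_1,\dots,\epsilon_m)$, $\epsilon_j=\pm1$, whose common fixed set is again the diagonal polydisk.
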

\begin{proof}
From Sec. \ref{firstdomain} the map $\varphi$ is a K\"ahler immersion, thus by Lemma \ref{ki} also $f$ is.

 It remains to prove that $f$ is totally geodesic. Let $Z=(z_{jk})$. From the expression of $f$ and \eqref{phizeta1}, $\{\de_{z_{jj}}, \hs \de_{z_{kk}}, \hs \de_{w}\}$ is a basis for $T f\! \left(M_{\Delta^m}(\mu)\right) \subset T M_{\Omega_1[m,n]}(\mu)$. Thus,  it is enough to show that:
\begin{equation}\label{eqcov1}
\nabla _{\de_{z_{jj}}}{\de_{z_{kk}}}, \nabla _{\de_{w}}{\de_{z_{kk}}}  \in T f\! \left(M_{\Delta^m}(\mu)\right) \quad j,k= 1, \dots, n .
\end{equation}
Recalling that the covariant derivative in terms of Christoffel symbols reads:
$$
\nabla _{\de_{z_{jj}}}{\de_{z_{kk}}} = \sum_{s=1}^m  \sum_{r=1}^n\left( \Gamma_{jj\hs kk}^{sr} \de_{z_{sr}}+\Gamma_{jj\hs kk}^{0} \de_{w}\right) \quad \text{and} \quad \nabla _{\de_w}{\de_{z_{kk}}} =  \sum_{s=1}^m  \sum_{r=1}^n \left(\Gamma_{0\hs kk}^{sr} \de_{z_{sr}}+\Gamma_{0\hs kk}^{0} \de_{w}\right),
$$
where we use the index $0$ for the $w$-entry, \eqref{eqcov1} is equivalent to $\Gamma_{jj\hs kk}^{sr} =\Gamma_{0\hs kk}^{sr}= 0 $ for $s\neq r$. In order to compute the Christoffel symbols let $A=I-ZZ^*$ and denote by $\wi A_{r_{11}r_{12},\dots,r_{s1}r_{s2}}$ the matrix obtained from $A$ by removing the $r_{11},\dots,r_{s1}$-th rows and the $r_{12},\dots,r_{s2}$-th columns. If $j\neq r_{11},\dots,r_{s1}$ we have
$$
\frac{\de \det \wi A_{r_{11}r_{12},\dots,r_{s1}r_{s2}} }{\de z_{jk}}= \sum_\ell \epsilon_{j\ell}\det \wi A_{r_{11}r_{12},\dots,r_{s1}r_{s2},j\ell}\hs\ov z_{\ell k},
$$
where $\epsilon_{j\ell}$ is an opportune constant equal to $1$ or $-1$. Clearly if the hypothesis $j\neq r_{11},\dots,r_{s1}$ is not satisfied the derivative is zero. Similarly if $j\neq r_{12},\dots,r_{s2}$ we have
$$
\frac{\de \det \wi A_{r_{11}r_{12},\dots,r_{s1}r_{s2}} }{\de \ov z_{jk}}= \sum_\ell \epsilon_{\ell j}\det \wi A_{r_{11}r_{12},\dots,r_{s1}r_{s2},\ell j}\hs z_{\ell k},
$$
zero otherwise. Thus we have
$$
\frac{\de\Phi_{\Omega_1,\mu}}{\de \ov z_{rs}}
=- \frac{\de\log(\det^\mu\!\! A -|w|^2)}{\de \ov z_{rs}}
=\frac{\mu\det^{\mu-1}\!\! A  \sum_{\ell} \epsilon_{\ell r} \det \wi{A}_{\ell r}\hs z _{\ell s}}{\det^\mu \!\! A  - |w|^2},
$$
\begin{equation*}\begin{split} 
 \frac{\de^2\Phi_{\Omega_1,\mu}}{\de w \hs \de \ov z_{rs}}=   \frac{\ov w \mu \det^{\mu-1}\!\! A  \sum_{\ell} \epsilon_{\ell r} \det \wi{A}_{\ell r}\hs z _{\ell s}}{\left(\det^\mu \!\!  A  - |w|^2\right)^ 2} 
 \end{split}\end{equation*}
and
\begin{equation}\label{eqgjkrs}\begin{split} 
 \frac{\de^2\Phi_{\Omega_1,\mu}}{\de z_{jk} \hs \de \ov z_{rs}}&=   \frac{\mu(\mu-1)\det^{\mu-2}\!\! A  \sum_{\ell} \epsilon_{j\ell} \det \wi{A}_{j\ell}\hs \ov z _{\ell k}\sum_{\ell} \epsilon_{\ell r} \det \wi{A}_{\ell r}\hs z _{\ell s}}{\det^\mu \!\!  A  - |w|^2} \\
& +  \frac{\mu\det^{\mu-1}\!\! A  \left(\sum_{\ell,t} \epsilon_{\ell r}  \epsilon'_{jt}\det \wi A_{\ell r,j t}\hs\ov z_{t k}\hs z _{\ell s} +\epsilon_{j r} \det \wi{A}_{j r}\hs \delta_{ks}\right)}
{\det^\mu \!\! A  - |w|^2}\\
& - \frac{\left(\mu\det^{\mu-1}\!\! A  \right)^2 \sum_{\ell} \epsilon_{\ell r} \det \wi{A}_{\ell r}\hs z _{\ell s} \sum_{\ell} \epsilon_{j\ell} \det \wi{A}_{j\ell}\hs \ov z _{\ell k}}{\left(\det^\mu \!\! A  - |w|^2\right)^2}.
 \end{split}\end{equation}
In particular
 \begin{equation*}\begin{split} 
\left( \frac{\de^2\Phi_{\Omega_1,\mu}}{\de w \hs \de \ov z_{rs}}\right)_{\mid Z= \operatorname {diag}(z_{11}, \dots, z_{mm})}= 0 \qquad \text{for } r\neq s. 
 \end{split}\end{equation*}
and  
\begin{equation*}\begin{split} 
\left( \frac{\de^2\Phi_{\Omega_1,\mu}}{\de z_{jk} \hs \de \ov z_{rs}} \right)_{\mid Z= \operatorname {diag}(z_{11}, \dots, z_{mm})}= 0 \qquad \text{for } r\neq s \text{ and } (j,k)\neq (r,s). 
 \end{split}\end{equation*}
Therefore, for  $(Z,w) \in  f\! \left(M_{\Delta^m}(\mu)\right)$, and $s \neq r$, we have
$$
\Gamma_{0\hs kk}^{rs} = \sum_\ell g^{rs,\ov \ell}\hs \frac{\de g_{kk,\ov\ell}}{\de w}=g^{rs,\ov {rs}}\hs \frac{\de g_{kk,\ov {rs}}}{\de w}=0,
$$
where the last equality is easily deduced from \eqref{eqgjkrs}. 
It remains to prove that for $(Z,w) \in  f\! \left(M_{\Delta^m}(\mu)\right)$, and $s \neq r$, we have
$$
\Gamma_{jj\hs kk}^{rs} = \sum_\ell g^{rs,\ov \ell}\hs \frac{\de g_{kk,\ov\ell}}{\de z_ {jj}}=g^{rs,\ov {rs}}\hs \frac{\de g_{kk,\ov {rs}}}{\de z_ {jj}}=0,
$$
i.e. $ \frac{\de^3\Phi_{\Omega_1,\mu}}{\de z_{jj} \hs \de z_{kk} \hs \de \ov z_{sr}}=0$. Assume $s\neq r$, we have
{\small \begin{equation*}\begin{split} 
 \left( \frac{\de^3\Phi_{\Omega_1,\mu}}{\de z_{jj} \hs \de z_{kk} \hs \de \ov z_{sr}} \right)_{\mid Z= \operatorname {diag}(z_{11}, \dots, z_{mm})}
=& \frac{\de}{\de z_{jj} }
 \left(   \frac{\mu(\mu-1)\det^{\mu-2}\!\! A \,  \epsilon_{kk} \det \wi{A}_{kk}\hs \ov z _{k k} \,\epsilon_{s r} \det \wi{A}_{s r}\hs z _{s s}}{\det^\mu \!\!  A  - |w|^2} \right.\\
& +  \frac{\mu\det^{\mu-1}\!\! A \left(\epsilon_{s r}  \epsilon_{kk}\det \wi A_{s r,k k}\hs\ov z_{k k}\hs z _{s s} +  \epsilon_{k r} \det \wi{A}_{k r}\hs  \delta_{ks}\right)}
{\det^\mu \!\! A  - |w|^2}\\
&\left. - \frac{\left(\mu\det^{\mu-1}\!\! A  \right)^2  \epsilon_{s r} \det \wi{A}_{s r}\hs z _{s s} \epsilon_{kk} \det \wi{A}_{kk}\hs \ov z _{k k}}
{\left(\det^\mu \!\! A  - |w|^2\right)^2}\right)_{\mid Z= \operatorname {diag}(z_{11}, \dots, z_{mm})}\\
% \end{split}\end{equation*}
% \begin{equation*}\begin{split} 
=& 
 \left(   \frac{\mu(\mu-1)\det^{\mu-2}\!\! A \,  \epsilon_{kk} \det \wi{A}_{kk}\hs \ov z _{k k} \,\epsilon_{s r} \frac{\de \det \wi{A}_{s r}}{\de z_{jj} } \hs z _{s s}}{\det^\mu \!\!  A  - |w|^2} \right.\\
& +  \frac{\mu\det^{\mu-1}\!\! A  \left(\epsilon_{s r}  \epsilon_{kk}\frac{\de \det \wi A_{s r,k k}}{\de z_{jj} }\hs\ov z_{k k}\hs z _{s s} +   \epsilon_{k r} \frac{\de \det \wi{A}_{k r}}{\de z_{jj} } \hs  \delta_{ks}\right)}
{\det^\mu \!\! A  - |w|^2}\\
&\left. - \frac{\left(\mu\det^{\mu-1}\!\! A  \right)^2  \epsilon_{s r} \frac{\de \det \wi{A}_{s r}}{\de z_{jj} }\hs z _{s s} \epsilon_{kk} \det \wi{A}_{kk}\hs \ov z _{k k}}
{\left(\det^\mu \!\! A  - |w|^2\right)^2}\right)_{\mid Z= \operatorname {diag}(z_{11}, \dots, z_{mm})}\\
=&0,
 \end{split}\end{equation*}}
concluding the proof.
\end{proof}

\subsection{Cartan--Hartogs domain of the second type}
By \eqref{gench}, \eqref{genphi} and \eqref{genericnorm2}, the Cartan--Hartogs associated to the second type Cartan domain is:
$$
M_{\Omega_2[n]}(\mu)=\{(u,w)\in \Omega_2[n]\times \mathds C \mid  |w|^2<\operatorname{det}^{\mu/2}\!\left(I_{m}-Z(u) Z(u)^{*}\right)\},
$$
and a K\"ahler potential for its Kobayashi metric is:
$$
\Phi_{\Omega_2,\mu} (u,w)= -\log\left( \operatorname{det}^{\mu/2}\!\left(I_{n}-Z(u) Z(u)^{*}\right)-|w|^2\right).
$$ 
%Notice that $\phi_{\Omega_2[n]} (u,w) = \phi_{\Omega_1[n]} (Z(u),w)$ once substituted  $\mu$ with $\mu/2$ in the second term.

\begin{lemma}\label{lempolytot2}
Let $\varphi\!:\Delta^{\frac n2}\rightarrow \Omega_2[n]$ be the map in \eqref{phi2}. Then $f: M_{\Delta^{n/2}}(\mu) \rightarrow M_{\Omega_2[n]}(\mu)$, $f(u, w)= \left(\varphi(u),w\right)$,
is a totally geodesic  K\"ahler immersion.
\end{lemma}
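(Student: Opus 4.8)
The plan is to follow exactly the strategy of Lemma \ref{lempolytot}, adapting the index bookkeeping to the antidiagonal embedding \eqref{phi2} and to the half-power $\mu/2$. By Lemma \ref{ki} the map $f$ is already a \K\ immersion, so it remains only to prove that it is totally geodesic. From \eqref{phi2} the image $f(M_{\Delta^{n/2}}(\mu))$ is parametrized by the variables $u_{j\,[\frac n2 -j+1]}$ for $j=1,\dots,[\frac n2]$ together with $w$; writing these antidiagonal entries as $u_1,\dots,u_{[n/2]}$, the set $\{\de_{u_1},\dots,\de_{u_{[n/2]}},\de_w\}$ is a basis for the tangent space to the image. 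As in the first-type case, it suffices to show that $\nabla_{\de_{u_j}}\de_{u_k}$ and $\nabla_{\de_w}\de_{u_k}$ stay tangent to $f(M_{\Delta^{n/2}}(\mu))$, which reduces to proving that the relevant off-diagonal Christoffel symbols vanish along the image, i.e. that
\begin{equation*}
\Gamma^{pq}_{0\,kk}=0 \quad\text{and}\quad \Gamma^{pq}_{jj\,kk}=0
\end{equation*}
whenever $(p,q)$ is not one of the antidiagonal index pairs carried by the embedding.

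First I would set $A=I_n-Z(u)Z(u)^*$ and record the derivatives of the minors $\det\wi A$ with respect to the free coordinates $u_{rs}$ (equivalently the entries $z_{rs}=-z_{sr}$ of the antisymmetric matrix $Z(u)$), exactly as in the minor-expansion computation of Lemma \ref{lempolytot}; the antisymmetry $Z=-Z^T$ only introduces paired entries and sign constants $\epsilon_{j\ell}=\pm1$, so the structure of the formulas for $\de_{\ov z_{rs}}\log(\det^{\mu/2}A-|w|^2)$ and its second derivatives is formally identical to \eqref{eqgjkrs}, with $\mu$ replaced by $\mu/2$. The key evaluation is at the image point, where $Z(u)$ is supported only on the antidiagonal: there the matrix $A$ becomes block-diagonal in $2\times2$ blocks (pairing row $j$ with row $[\frac n2]-j+1$ and the fixed central index when $n$ is odd), so every minor $\det\wi A$ and every mixed second and third derivative factorizes into a product over these blocks. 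This block structure forces the off-antidiagonal entries of the metric and of its first derivatives along the image to vanish, yielding $\Gamma^{pq}_{0\,kk}=0$ and the third-derivative identity $\de_{u_j}\de_{u_k}\de_{\ov z_{sr}}\Phi_{\Omega_2,\mu}=0$ for $(s,r)$ off the antidiagonal support.

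The main obstacle I expect is purely combinatorial: tracking which minors $\det\wi A_{r_{11}r_{12},\dots}$ survive once the antisymmetry constraint $z_{rs}=-z_{sr}$ is imposed, since a single free parameter $u_{rs}$ now appears in two entries and the chain rule produces extra terms absent from the first-type computation. Concretely, when I differentiate $\det\wi A$ with respect to $u_{jk}$ I must account for both the $(j,k)$ and $(k,j)$ occurrences, and I will need to verify that after evaluation on the antidiagonal these paired contributions either cancel or land in the tangent directions. I would handle this by observing, as the excerpt already does in Section \ref{seconddomain}, that the triple product on $T_0\Omega_2[n]$ is the restriction of the first-type triple product \eqref{eqtriprod} on $T_0\Omega_1[n,n]$, so the second-type embedding sits inside the first-type one; this lets me reuse the vanishing established in Lemma \ref{lempolytot} on the ambient first-type tangent space and then restrict, rather than redo the full minor expansion from scratch. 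With the off-antidiagonal Christoffel symbols shown to vanish, the covariant derivatives of the basis vector fields remain tangent, and $f$ is totally geodesic, completing the proof.
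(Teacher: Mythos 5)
Your setup coincides with the paper's: Lemma \ref{ki} gives the \K\ immersion, total geodesicity is reduced to the vanishing of the off-antidiagonal Christoffel symbols along the image of \eqref{phi2}, and the derivatives of $\Phi_{\Omega_2,\mu}$ are expressed through the first-type minor expansions with $\mu$ replaced by $\mu/2$. The gap is at the one point where the second-type case genuinely differs, namely the paired entries $z_{rs}=-z_{sr}$. You propose to ``reuse the vanishing established in Lemma \ref{lempolytot} on the ambient first-type tangent space and then restrict'', justified by the fact that the triple product \eqref{eqtriprod} on $T_0\Omega_2[n]$ restricts that of $T_0\Omega_1[n,n]$. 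But the sub-HJPTS/sub-HSSNT correspondence only yields total geodesicity of $\Omega_2[n]$ inside $\Omega_1[n,n]$ for the \emph{symmetric-space} metrics; it says nothing about the Cartan--Hartogs metrics, which are not homogeneous and to which the Jordan-triple machinery does not apply. Since $N_{\Omega_2}=\det^{1/2}(I-ZZ^*)$, one has $\Phi_{\Omega_2,\mu}(u,w)=\Phi_{\Omega_1,\mu/2}(Z(u),w)$, so the relevant ambient manifold is $M_{\Omega_1[n,n]}(\mu/2)$, and to restrict ambient Christoffel vanishing you would need to know that $M_{\Omega_2[n]}(\mu)$ is itself totally geodesic in $M_{\Omega_1[n,n]}(\mu/2)$. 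That assertion is of exactly the type of Theorem \ref{thmtotg}, which the paper proves \emph{later} using the present lemma, so invoking it here is circular unless you supply an independent argument --- for instance, exhibiting $M_{\Omega_2[n]}(\mu)$ as the fixed-point set of the isometry $(Z,w)\mapsto(-Z^T,w)$ of $M_{\Omega_1[n,n]}(\mu/2)$ obtained from part (2) of Lemma \ref{lemautlift}. Nothing of this kind appears in your outline, and it is a needed idea, not bookkeeping.

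Two further mismatches: Lemma \ref{lempolytot} establishes the vanishing at points $Z=\operatorname{diag}(z_{11},\dots,z_{mm})$, whereas the image of \eqref{phi2} consists of \emph{antidiagonal} matrices, so even the ambient vanishing you want to quote must first be transported by the lift (again Lemma \ref{lemautlift}(2)) of a column permutation $Z\mapsto ZP$; and at image points the matrix $A=I-Z(u)Z(u)^*$ is in fact diagonal (each row of $Z(u)$ carries a single nonzero entry), not merely $2\times2$ block-diagonal as you assert --- harmless for the conclusion, but a sign the evaluation was not carried out. The paper avoids all of this by working directly in the $u$-coordinates: it uses the chain rule $\de_{\ov u_{rs}}\Phi_{\Omega_2,\mu}=\de_{\ov z_{rs}}\Phi_{\Omega_1,\mu'}-\de_{\ov z_{sr}}\Phi_{\Omega_1,\mu'}$ with $\mu'=\mu/2$, expands each mixed second derivative of $\Phi_{\Omega_2,\mu}$ into four first-type second derivatives and the third derivatives likewise, and checks term by term that they vanish on the image under the constraint $s\neq n+1-r$. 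Your proof would be correct if you either carried out that four-term expansion or replaced the ``restrict'' step by the fixed-point-set argument above; as written, the key step is unsupported.
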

\begin{proof}
From Sec. \ref{seconddomain} the map $\varphi$ is K\"ahler, thus by Lemma \ref{ki} $f$ also is.
%We proceed as in the proof of Lemma \ref{lempolytot}. Since $\varphi$ is K\"ahler and in particular  $N_{\Delta^{[n/2]}}(u,u)=N_{\ii}(\varphi(u),\varphi(u))$, it follows that $f$ is a well defined holomorphic isometric immersion. 
Let us use the parametrization described in Sec. \ref{seconddomain}. In terms of the Christoffel symbols, since:
$$
\nabla _{\de_{u_{j\, n+1-j}}}{\de_{u_{k\, n+1-k}}} = \sum_{s,r=1}^n \Gamma_{j\, n+1-j,\, k\, n+1-k}^{sr} \de_{u_{sr}}+\Gamma_{j\,n+1-j,\, k\, n+1-k}^{0} \de_{w}, $$
and 
$$
 \quad \nabla _{\de_w}{\de_{u_{ k \, n+1-k}}} = \sum_{s,r=1}^n \Gamma_{0,\, k \,n+1-k}^{sr} \de_{u_{sr}}+\Gamma_{0,\, k \, n+1-k}^{0} \de_{w},
$$
the map $f$ to be totally geodesic is equivalent to:
\begin{equation}\label{eqgam4}
\Gamma_{j\, n+1-j,\, k\,  n+1-k}^{sr}=\Gamma_{0,\, k \, n+1-k}^{sr}= 0 ,
\end{equation}
 for $s\neq n+1 - r$ and $1\leq j,k \leq n/2$. Observing that $\Phi_{\Omega_2,\mu} (u,w) = \Phi_{\Omega_1,\mu} (Z(u),w)$ once substituted  $\mu$ with $\mu/2$ in the second term,
we have:
\begin{equation*}\begin{split} 
\frac{\de\Phi_{\Omega_2,\mu}}{\de \ov u_{rs}} =& \frac{\de\Phi_{\Omega_1,\mu}}{\de \ov z_{rs}} - \frac{\de\Phi_{\Omega_1,\mu}}{\de \ov z_{sr}}
=- \frac{\de\log(\det^{\frac{\mu}{2}}\!\! A -|w|^2)}{\de \ov z_{rs}}+\frac{\de\log(\det^{\frac{\mu}{2}}\!\! A -|w|^2)}{\de \ov z_{sr}}\\
=&\frac{{\frac{\mu}{2}}\det^{{\frac{\mu}{2}}-1}\!\! A  \left(\sum_{\ell=1}^n \epsilon_{\ell r} \det \wi{A}_{\ell r}\hs z _{\ell s}(u)-\sum_{\ell=1}^n \epsilon_{\ell s} \det \wi{A}_{\ell s}\hs z _{\ell r}(u)\right)}{\det^{\frac{\mu}{2}} \!\! A  - |w|^2},
 \end{split}\end{equation*}
and
 \begin{equation}\label{eqwu2}\begin{split} 
\frac{\de^2\Phi_{\Omega_2,\mu}}{\de w \hs \de \ov u_{rs}} &= \frac{\de^2\Phi_{\Omega_1,\mu}}{\de w \hs \de \ov z_{rs}}-\frac{\de^2\Phi_{\Omega_1,\mu}}{\de w \hs \de \ov z_{sr}}\\
&=   \frac{\ov w {\frac{\mu}{2}} \det^{{\frac{\mu}{2}}-1}\!\! A  \left(\sum_{\ell=1}^n \epsilon_{\ell r} \det \wi{A}_{\ell r}\hs z _{\ell s}(u)-\sum_{\ell=1}^n \epsilon_{\ell s} \det \wi{A}_{\ell s}\hs z _{\ell r}(u)\right)}{\left(\det^{\frac{\mu}{2}} \!\!  A  - |w|^2\right)^ 2}.
 \end{split}\end{equation} 
 For $(u,w)\in  f (\Delta_{[n/2]})$ and $r\neq n+1-s$, we get
 $$
 \frac{\de^2\phi_2}{\de w \hs \de \ov u_{rs}} = \frac{{\frac{\mu}{2}}\det^{{\frac{\mu}{2}}-1}\!\! A  \left( \epsilon_{n+1-r\, r} \det \wi{A}_{n+1-s\, r}\hs z _{n+1-s\, s}(u)-\epsilon_{n+1-s\, s} \det \wi{A}_{n+1-r\, s}\hs z _{n+1-r\, r}(u)\right)}{\det^{\frac{\mu}{2}} \!\! A  - |w|^2}=0.
 $$
Notice that
 \begin{equation}\label{thirdterm}\begin{split} 
 \frac{\de^2\Phi_{\Omega_2,\mu}}{\de u_{jk} \hs \de \ov u_{rs}} &=\left(\frac{\de\Phi_{\Omega_1,\mu}}{\de  z_{jk}} - \frac{\de\Phi_{\Omega_1,\mu}}{\de  z_{kj}}\right) \left(\frac{\de\Phi_{\Omega_1,\mu}}{\de  \ov z_{rs}} - \frac{\de\Phi_{\Omega_1,\mu}}{\de \ov z_{sr}}\right)\\
 &= \frac{\de^2\Phi_{\Omega_1,\mu}}{\de z_{jk} \hs \de \ov z_{rs}} + \frac{\de^2\Phi_{\Omega_1,\mu}}{\de z_{kj} \hs \de \ov z_{sr}} -\frac{\de^2\Phi_{\Omega_1,\mu}}{\de z_{kj} \hs \de \ov z_{rs}} - \frac{\de^2\Phi_{\Omega_1,\mu}}{\de z_{jk} \hs \de \ov z_{sr}}.
 \end{split}\end{equation}

 If we take $\ov u_{rs}$ with $r\neq n+1-s$ and $u_{jk}$ with $(j,k)\neq(r,s)$, then the indexes of $\frac{\de^2\Phi_{\Omega_1,\mu}}{\de z_{jk} \hs \de \ov z_{rs}}$, $ \frac{\de^2\Phi_{\Omega_1,\mu}}{\de z_{kj} \hs \de \ov z_{sr}}$, $\frac{\de^2\Phi_{\Omega_1,\mu}}{\de z_{kj} \hs \de \ov z_{rs}}$, $ \frac{\de^2\Phi_{\Omega_1,\mu}}{\de z_{jk} \hs \de \ov z_{sr}} $ in \eqref{thirdterm} must satisfies $r\neq n+1-s$, $(j,k)\neq(r,s)$ and $(k,j)\neq(r,s)$. Under this conditions on the indexes, it is just a straightforward computation to prove that 
  $
  \left( \frac{\de^2\Phi_{\Omega_1,\mu}}{\de z_{jk} \hs \de \ov z_{rs}}\right)_{\mid (u,w)\in  f (\Delta_{[n/2]})}
  =\left( \frac{\de^2\Phi_{\Omega_1,\mu}}{\de z_{kj} \hs \de \ov z_{sr}}\right)_{\mid (u,w)\in  f (\Delta_{[n/2]})}
  =\left( \frac{\de^2\Phi_{\Omega_1,\mu}}{\de z_{kj} \hs \de \ov z_{rs}}\right)_{\mid (u,w)\in  f (\Delta_{[n/2]})}
  =\left( \frac{\de^2\Phi_{\Omega_1,\mu}}{\de z_{jk} \hs \de \ov z_{sr}}\right)_{\mid (u,w)\in  f (\Delta_{[n/2]})}=0,
  $
 and in particular that $\left( \frac{\de^2\Phi_{\Omega_2,\mu}}{\de u_{jk} \hs \de \ov u_{rs}}\right)_{\mid Z(u)\in \Delta}=0$. We conclude that for $(u,w)\in  f (\Delta_{[n/2]})$ and $r\neq n+1-s$ we have
$$
\Gamma_{0,\hs k \hs n+1-k}^{sr}= \sum_\ell g^{rs,\ov \ell}\hs \frac{\de g_{k \hs n+1-k,\hs\ov\ell}}{\de w}=g^{rs,\ov {rs}}\hs \frac{\de g_{k \hs n+1-k,\,\ov {rs}}}{\de w},
$$
 and 
 $$
\Gamma_{n+1-j\hs j, n+1-k\hs k}^{sr} = \sum_\ell g^{rs,\ov \ell}\hs \frac{\de g_{ n+1-k\hs k,\ov\ell}}{\de u_ {n+1-j\hs j}}=g^{rs,\ov {rs}}\hs \frac{\de g_{ n+1-k\hs k,\ov {rs}}}{\de u_ {n+1-j\hs j}}.% \quad \text{for } Z(u)\in \Delta \text{ and } r\neq n+1. 
$$
Deriving \eqref{eqwu2}, we can see that $\frac{\de g_{k \hs [n+1-k],\,\ov {rs}}}{\de w}=0$, which readily implies that $\Gamma_{0,\hs k \hs n+1-k}^{sr}=0$.
It remains to prove that, under the above conditions $\frac{\de g_{n+1-k\hs k,\ov {rs}}}{\de u_ {n+1-j\hs j}}=0$ (or equivalently that $\frac{\de^3\phi_2}{\de u_ {n+1-j\hs j}\hs \de u_{n+1-k\hs k} \hs \de \ov u_{rs}}=0$). We have

  \begin{equation}\label{eqgjkrs}\begin{split} 
 &\frac{\de^3\Phi_{\Omega_1,\mu}}{\de z_{j\hs [n+1-j]}\hs \de z_{k\hs[ n+1-k]} \hs \de \ov z_{rs}}=  \frac{\de}{\de z_{j\hs[ n+1-j]}}\left(\frac{\de^2\Phi_{\Omega_1,\mu}}{\de z_{k\hs[ n+1-k]} \hs \de \ov z_{rs}}\right)\\
 &= \frac{\de}{\de z_{j\hs [n+1-j]}}\left( \frac{{\frac{\mu}{2}}({\frac{\mu}{2}}-1)\det^{{\frac{\mu}{2}}-2}\!\! A   \epsilon_{k k} \det \wi{A}_{k k}\hs \ov z _{k \hs [n+1-k]}\epsilon_{n+1-s\hs r} \det \wi{A}_{[n+1-s]\hs r}\hs z _{[n+1-s]\hs s}}{\det^{\frac{\mu}{2}} \!\!  A  - |w|^2}+\right. \\
& +  \frac{{\frac{\mu}{2}}\det^{{\frac{\mu}{2}}-1}\!\! A  \left( \epsilon_{[n+1-s]\hs r}  \epsilon'_{kk}\det \wi A_{[n+1-s]\hs r,k k}\hs\ov z_{k \hs [n+1-k]}\hs z _{[n+1-s]\hs s} +\epsilon_{k r} \det \wi{A}_{k r}\hs \delta_{[n+1-k]\hs s}\right)}
{\det^{\frac{\mu}{2}} \!\! A  - |w|^2}+\\ 
& \left.- \frac{\left({\frac{\mu}{2}}\det^{{\frac{\mu}{2}}-1}\!\! A  \right)^2  \epsilon_{[n+1-s]\hs r} \det \wi{A}_{[n+1-s]\hs r}\hs z _{[n+1-s]\hs s}  \epsilon_{kk} \det \wi{A}_{kk}\hs \ov z _{k \hs [n+1-k]}}{\left(\det^{\frac{\mu}{2}} \!\! A  - |w|^2\right)^2}\right).
 \end{split}\end{equation}
 
If we assume that $(u,w)\in  f (\Delta_{[n/2]})$, we obtain
 \begin{equation}\label{eqgjkrs}\begin{split} 
& \frac{\de^3\Phi_{\Omega_1,\mu}}{\de z_{j\hs[ n+1-j]}\hs \de z_{k\hs [n+1-k]} \hs \de \ov z_{rs}}=
\frac{{\frac{\mu}{2}}({\frac{\mu}{2}}-1)\det^{{\frac{\mu}{2}}-2}\!\! A   \epsilon_{k k} \det \wi{A}_{k k}\hs \ov z _{k \hs [n+1-k]}\epsilon_{[n+1-s]\hs r} \frac{\de \det \wi{A}_{[n+1-s]\hs r}}{\de z_{j\hs[ n+1-j]}}\hs z_{[n+1-s]\hs s} }{\det^{\frac{\mu}{2}} \!\!  A  - |w|^2}+ \\
& +  \frac{{\frac{\mu}{2}}\det^{{\frac{\mu}{2}}-1}\!\! A  \left( \epsilon_{[n+1-s]\hs r}  \epsilon'_{kk}\frac{\de \det \wi A_{[n+1-s]\hs r,k k}}{\de z_{j\hs [n+1-j]}}\hs\ov z_{k \hs[ n+1-k]}\hs z_{[n+1-s]\hs s} +\epsilon_{k r} \det \wi{A}_{k r}\hs \delta_{[n+1-k]\hs s}\right)}
{\det^{\frac{\mu}{2}} \!\! A  - |w|^2}+\\ 
& - \frac{\left({\frac{\mu}{2}}\det^{{\frac{\mu}{2}}-1}\!\! A  \right)^2  \epsilon_{[n+1-s]\hs r}\frac{\de \det \wi{A}_{[n+1-s]\hs r}}{\de z_{j\hs [n+1-j]}} \hs z_{[n+1-s]\hs s}  \epsilon_{kk} \det \wi{A}_{kk}\hs \ov z _{k \hs [n+1-k]}}{\left(\det^{\frac{\mu}{2}} \!\! A  - |w|^2\right)^2},
 \end{split}\end{equation}
 hence if we also assume
 $s\neq n+1 - r$ and $1\leq j,k \leq [n/2]$ we see that $\frac{\de^3\Phi_{\Omega_1,\mu}}{\de z_{j\hs[ n+1-j]}\hs \de z_{k\hs [n+1-k]} \hs \de \ov z_{rs}}=0$. Thus \eqref{eqgam4} holds true, concluding the proof.
 \end{proof}

\subsection{Cartan--Hartogs domain of the third type}
By \eqref{gench}, \eqref{genphi} and \eqref{genericnorm3}, the Cartan--Hartogs associated to a third type domain is given by:
$$
 M_{\Omega_3[m]}(\mu)=\{(Z,w)\in \Omega_2[n]\times \mathds C \mid  |w|^2< {\det}^\mu\!\left(I_{m}-Z Z^{*}\right)\},
$$
and its Kobayashi metric is described by the K\"ahler potential:
$$
\Phi_{\Omega_3,\mu} (Z,w)= -\log\left( {\det}^\mu\!\left(I_{m}-Z Z^{*}\right)-|w|^2\right).
$$

 \begin{lemma}\label{lempolytot3}
 Let $\varphi\!:\Delta^m\rightarrow \Omega_3[m]$ be the map in \eqref{phizeta3}. Then $f: M_{\Delta^{m}}(\mu) \rightarrow M_{\Omega_3[m]}(\mu)$, $f(z,w)=(\varphi(z),w)$, is a totally geodesic  K\"ahler immersion.
\end{lemma}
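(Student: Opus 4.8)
The plan is to run the same argument as for the first type domain in Lemma \ref{lempolytot}, taking advantage of the fact that the generic norm \eqref{genericnorm3} of $\Omega_3[m]$ is formally identical to that of $\Omega_1[m,m]$ in \eqref{genericnorm1}. Consequently $\Phi_{\Omega_3,\mu}(Z,w)$ is precisely $\Phi_{\Omega_1,\mu}(Z,w)$ (with $n=m$ and, crucially, the \emph{same} exponent $\mu$, so no rescaling is needed as in the second type case) restricted to the subspace of symmetric matrices $Z=Z^T$. Since $\varphi$ in \eqref{phizeta3} is the diagonal embedding and is \K\ by Sec. \ref{thirddomain}, Lemma \ref{ki} already gives that $f$ is a \K\ immersion, and only total geodesy remains to be checked.

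First I would introduce the independent holomorphic coordinates $u_{jk}$, $j\leq k$, on $\Omega_3[m]$, related to the ambient matrix entries by $z_{jk}=z_{kj}=u_{jk}$. Because $\varphi(z)=\operatorname{diag}(z_1,\dots,z_m)$ activates only the diagonal coordinates, $\{\de_{u_{jj}},\de_w\}$ is a basis of $Tf\!\left(M_{\Delta^m}(\mu)\right)$, and the normal directions are exactly the $u_{rs}$ with $r<s$. Thus, in complete analogy with \eqref{eqcov1}, the map $f$ is totally geodesic if and only if $\Gamma^{rs}_{jj\hs kk}=\Gamma^{rs}_{0\hs kk}=0$ for all $r\neq s$.

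Next, in contrast with the antisymmetric substitution of Lemma \ref{lempolytot2}, the symmetric relation $z_{jk}=z_{kj}=u_{jk}$ yields a \emph{symmetrizing} chain rule: for $r<s$ one has $\de_{\ov u_{rs}}\Phi_{\Omega_3,\mu}=\de_{\ov z_{rs}}\Phi_{\Omega_1,\mu}+\de_{\ov z_{sr}}\Phi_{\Omega_1,\mu}$, with the single term $\de_{\ov z_{rr}}\Phi_{\Omega_1,\mu}$ on the diagonal. Differentiating once more, both the mixed derivative $\de_w\de_{\ov u_{rs}}\Phi_{\Omega_3,\mu}$ and the Hessian entry $\de_{u_{jk}}\de_{\ov u_{rs}}\Phi_{\Omega_3,\mu}$ become sums of the corresponding first type quantities, which I would then evaluate at $Z=\operatorname{diag}(z_{11},\dots,z_{mm})$. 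Since the diagonal coordinates $u_{jj}=z_{jj}$ are untouched by the symmetrization, the decisive third derivative reduces to $\de_{u_{jj}}\de_{u_{kk}}\de_{\ov u_{rs}}\Phi_{\Omega_3,\mu}=\de_{z_{jj}}\de_{z_{kk}}\de_{\ov z_{rs}}\Phi_{\Omega_1,\mu}+\de_{z_{jj}}\de_{z_{kk}}\de_{\ov z_{sr}}\Phi_{\Omega_1,\mu}$ for $r<s$.

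Finally I would invoke the vanishing statements already proved for the first type domain: at a diagonal point and for $r\neq s$, each of $\de_w\de_{\ov z_{rs}}\Phi_{\Omega_1,\mu}$, the off-diagonal Hessian entries $\de_{z_{jk}}\de_{\ov z_{rs}}\Phi_{\Omega_1,\mu}$ with $(j,k)\neq(r,s)$, and the third derivatives $\de_{z_{jj}}\de_{z_{kk}}\de_{\ov z_{sr}}\Phi_{\Omega_1,\mu}$ all vanish. Hence the symmetrized sums above vanish term by term, and exactly as in Lemma \ref{lempolytot} one obtains $\Gamma^{rs}_{0\hs kk}=g^{rs,\ov{rs}}\hs\de_w g_{kk,\ov{rs}}=0$ and $\Gamma^{rs}_{jj\hs kk}=g^{rs,\ov{rs}}\hs\de_{u_{jj}} g_{kk,\ov{rs}}=0$ for $r<s$, proving total geodesy. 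The only point requiring care is bookkeeping: one must verify that the symmetrization never couples a surviving diagonal tangent direction to a nonzero off-diagonal normal contribution, but this is automatic because restricting to the diagonal annihilates precisely the off-diagonal matrix entries, just as in the first type computation.
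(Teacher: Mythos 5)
Your proposal is correct and follows exactly the route the paper intends: the paper omits this proof, stating only that it is ``similar to those of Lemma \ref{lempolytot} and Lemma \ref{lempolytot2}'', and what you have written is precisely that argument carried out, with the symmetric substitution $z_{jk}=z_{kj}=u_{jk}$ playing the role of the antisymmetric one in the second type case and, correctly, no rescaling of $\mu$ since $N_{\Omega_3}$ has no square root. The reduction of the relevant second and third derivatives of $\Phi_{\Omega_3,\mu}$ to symmetrized sums of the first type quantities, each of which vanishes at diagonal points by the computations in Lemma \ref{lempolytot}, is sound (including the bookkeeping point that $j<k$, $r<s$, $(j,k)\neq(r,s)$ forces $(k,j)\neq(r,s)$), so nothing is missing.
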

\proof 
The proof is similar to those of Lemma \ref{lempolytot} and Lemma \ref{lempolytot2} and therefore is omitted.
\endproof

\subsection{Cartan--Hartogs domain of the fourth type}
By \eqref{gench}, \eqref{genphi} and \eqref{genericnorm4}, the Cartan--Hartogs associated to a fourth type domain is given by:
$$
M_{\Omega_4[n]}(\mu)=\left\{(u,w)\in \Omega_4[n]\times \mathds C \midd  |w|^2<
\left(1+\left|\sum_{j=1}^{n} z_{j}^{2}\right|^{2}-2 \sum_{j=1}^{n}\left|z_{j}\right|^{2}\right)^\mu
\right\},
$$
and a K\"ahler potential for the \Ko\ metric is:
$$
\Phi_{\Omega_4,\mu} (z,w)= -\log\left(\left(1+\left|\sum_{j=1}^{n} z_{j}^{2}\right|^{2}-2 \sum_{j=1}^{n}\left|z_{j}\right|^{2}\right)^\mu
- |w|^2\right).
$$ 
%Observe that the rank of $\Omega_4[n]$ is $r=2$, in particular does not depend on $n$.
 \begin{lemma}\label{lempolytot4}
 Let $\varphi\!:\Delta^2\rightarrow \Omega_4[n]$ be the map in \eqref{phizeta4}. Then $f\!: M_{\Delta^2}(\mu) \rightarrow M_{\Omega_4[n]}(\mu)$, $f(z_1,z_2,w) = \left(\varphi(z_1,z_2) , w\right)$,
is a totally geodesic K\"ahler immersion.
\end{lemma}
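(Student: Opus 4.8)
The plan is to avoid the direct Christoffel-symbol computation used for the first two types and instead lift to the Cartan--Hartogs level the symmetry argument that established the total geodesy of $\varphi$ in Section~\ref{fourthdomain}. Since by Section~\ref{fourthdomain} the map $\varphi$ in \eqref{phizeta4} is a K\"ahler immersion, Lemma~\ref{ki} (applicable because $\Omega_4[n]$ is a Cartan domain) already gives that $f$ is a K\"ahler immersion; it therefore remains only to prove that $f\!\left(M_{\Delta^2}(\mu)\right)$ is totally geodesic.

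Recall that $\varphi(\Delta^2)$ was identified in Section~\ref{fourthdomain} as the fixed-point set of the isometry $\sigma:(z_1,\dots,z_n)\mapsto(z_1,z_2,-z_3,\dots,-z_n)$ of $\Omega_4[n]$. The first step is to promote $\sigma$ to an automorphism $\tilde\sigma$ of the Cartan--Hartogs domain by setting
$$
\tilde\sigma(z,w)=(\sigma(z),w)=\left((z_1,z_2,-z_3,\dots,-z_n),w\right).
$$
Because $(-z_j)^2=z_j^2$ and $|-z_j|^2=|z_j|^2$, both $\sum_{j} z_j^2$ and $\sum_{j}|z_j|^2$ are invariant under $\sigma$, so by \eqref{genericnorm4} the generic norm satisfies $N_{\Omega_4[n]}(\sigma(z),\sigma(z))=N_{\Omega_4[n]}(z,z)$. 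Consequently $\Phi_{\Omega_4,\mu}\circ\tilde\sigma=\Phi_{\Omega_4,\mu}$, which shows simultaneously that $\tilde\sigma$ preserves $M_{\Omega_4[n]}(\mu)$ and, being a linear biholomorphism preserving the K\"ahler potential, that it is a holomorphic isometry of it.

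The second step is to verify that the fixed-point set of $\tilde\sigma$ is precisely $f\!\left(M_{\Delta^2}(\mu)\right)$. In the base this fixed set is the slice $\{z_3=\dots=z_n=0\}\cap\Omega_4[n]$, which coincides with $\varphi(\Delta^2)$ by Section~\ref{fourthdomain}; one checks that the linear map $(z_1,z_2)\mapsto\left(\tfrac12(z_1+z_2),\tfrac{i}{2}(z_1-z_2)\right)$ carries $\Delta^2$ bijectively onto this slice, the domain conditions matching because the factorization $N_{\Omega_4[n]}(\varphi(z),\varphi(z))=(1-|z_1|^2)(1-|z_2|^2)$ forces the only admissible component to be $\{|z_1|<1,\,|z_2|<1\}$. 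Since $\tilde\sigma$ fixes the $w$-coordinate and the fiber bound over $\varphi(z)$ equals $N_{\Omega_4[n]}^\mu(\varphi(z),\varphi(z))=N_{\Delta^2}^\mu(z)$, this identification lifts verbatim from the base to the Cartan--Hartogs level, giving $\mathrm{Fix}(\tilde\sigma)=f\!\left(M_{\Delta^2}(\mu)\right)$.

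Finally, the fixed-point set of a holomorphic isometry of a K\"ahler manifold is a totally geodesic complex submanifold, so $f\!\left(M_{\Delta^2}(\mu)\right)$ is totally geodesic and $f$ is a totally geodesic K\"ahler immersion. The only genuinely delicate point is the domain-matching in the second step---confirming that $\varphi$ surjects onto the \emph{entire} fixed slice rather than a proper subset---which is what makes $\mathrm{Fix}(\tilde\sigma)$ exactly the image of $f$; everything else follows formally from the invariance of the generic norm. A direct computation of Christoffel symbols along the lines of Lemmas~\ref{lempolytot} and~\ref{lempolytot2} would also work, but the symmetry argument is considerably shorter.
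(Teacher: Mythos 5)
Your proof is correct, but it takes a genuinely different route from the paper's. The paper establishes total geodesy of $f$ by direct computation: it writes out $\frac{\partial \Phi_{\Omega_4,\mu}}{\partial \bar z_k}$, the mixed second derivatives $\frac{\partial^2 \Phi_{\Omega_4,\mu}}{\partial w\,\partial\bar z_k}$ and $\frac{\partial^2 \Phi_{\Omega_4,\mu}}{\partial z_h\,\partial\bar z_k}$, and then checks that on the slice $\{z_3=\dots=z_n=0\}$ the Christoffel symbols $\Gamma^k_{jh}$ and $\Gamma^k_{0h}$ vanish for $k>2$, exactly in the spirit of Lemmas~\ref{lempolytot} and~\ref{lempolytot2}. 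You instead lift to the Hartogs level the symmetry argument that the paper uses only at the base level in Section~\ref{fourthdomain}: since the involution $\sigma$ preserves the generic norm \eqref{genericnorm4}, the map $\tilde\sigma(z,w)=(\sigma(z),w)$ preserves $\Phi_{\Omega_4,\mu}$ and is a holomorphic isometry of $M_{\Omega_4[n]}(\mu)$ (this is in fact a special case of part~(2) of Lemma~\ref{lemautlift}, as $\sigma$ is a linear automorphism fixing the origin), and the fixed-point set of a holomorphic isometry of a K\"ahler manifold is a totally geodesic complex submanifold. The one delicate point, which you correctly isolate and resolve, is that $\varphi$ surjects onto the \emph{entire} fixed slice: writing $a=\varphi(z)$ one has $a_1^2+a_2^2=z_1z_2$ and $|a_1|^2+|a_2|^2=\tfrac12\left(|z_1|^2+|z_2|^2\right)$, so the slice conditions become $|z_1|^2+|z_2|^2<2$ and $(1-|z_1|^2)(1-|z_2|^2)>0$, and inside the first region the second forces $|z_1|<1$ and $|z_2|<1$ (both moduli at least $1$ would contradict $|z_1|^2+|z_2|^2<2$); hence $\mathrm{Fix}(\tilde\sigma)=f\!\left(M_{\Delta^2}(\mu)\right)$ as claimed. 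As for what each approach buys: yours is shorter and conceptually cleaner, but it hinges on the polydisk being presented as the fixed locus of a single isometric involution, which the paper exhibits only for the fourth type; the paper's Christoffel computation, while heavier, is uniform with the treatment of types I--III and needs no such identification.
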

\proof 

From Sec. \ref{fourthdomain} the map $\varphi$ is a K\"ahler immersion, thus by Lemma \ref{ki} $f$ also is. It remains to prove that $f$ is totally geodesic, which is equivalent to $\Gamma_{j\hs k}^{\ell}= 0 $ for $\ell > 2$ and $0\leq j,k \leq 2$, where:
$$
\nabla _{\de_{z_{j}}}{\de_{z_{k}}} = \sum_{\ell=1}^n \Gamma_{j\hs k}^{\ell}\, \de_{z_{\ell}}+\Gamma_{j \hs k}^{0}\, \de_{w},\qquad
 \nabla _{\de_w}{\de_{z_{ k }}} = \sum_{\ell=1}^n \Gamma_{0\hs k}^{\ell}\, \de_{z_{\ell}}+\Gamma_{0\hs k}^{0}\, \de_{w}.
$$
%and the expression of $f$ and \eqref{genericnorm4}, 
 We have
\begin{equation*}\begin{split} 
\frac{\de\Phi_{\Omega_4,\mu}}{\de \ov z_{k}} 
=& \frac{-\de \log\left(\left(1+\left|\sum_{\ell=1}^{n} z_{\ell}^{2}\right|^{2}-2 \sum_{\ell=1}^{n}\left|z_{\ell}\right|^{2}\right)^\mu
- |w|^2\right)}
{\de \ov z_k}\\
=&\frac{-\mu\left(1+\left|\sum_{\ell=1}^{n} z_{\ell}^{2}\right|^{2}-2 \sum_{\ell=1}^{n}\left|z_{\ell}\right|^{2}\right)^{\mu-1} \left(2 \ov z_k \sum_{\ell=1}^{n} z_{\ell}^{2} -2 z_k\right)}{\left(1+\left|\sum_{\ell=1}^{n} z_{\ell}^{2}\right|^{2}-2 \sum_{\ell=1}^{n}\left|z_{\ell}\right|^{2}\right)^\mu
- |w|^2},
 \end{split}\end{equation*}
 \begin{equation*}\begin{split} 
\frac{\de^2\Phi_{\Omega_4,\mu}}{\de w \de  \ov z_{k}} 
=\frac{-\mu\left(1+\left|\sum_{\ell=1}^{n} z_{\ell}^{2}\right|^{2}-2 \sum_{\ell=1}^{n}\left|z_{\ell}\right|^{2}\right)^{\mu-1} \left(2 \ov z_k \sum_{\ell=1}^{n} z_{\ell}^{2} -2 z_k\right)\ov w}{\left(\left(1+\left|\sum_{\ell=1}^{n} z_{\ell}^{2}\right|^{2}-2 \sum_{\ell=1}^{n}\left|z_{\ell}\right|^{2}\right)^\mu
- |w|^2\right)^2},
 \end{split}\end{equation*}
 and
 \begin{equation*}\begin{split} 
\frac{\de^2\Phi_{\Omega_4,\mu}}{\de z_{h}\de \ov z_{k}} 
=& \frac{-\mu\left(\mu-1\right)\left(1+\left|\sum_{\ell=1}^{n} z_{\ell}^{2}\right|^{2}-2 \sum_{\ell=1}^{n}\left|z_{\ell}\right|^{2}\right)^{\mu-2} \left(2 \ov z_k \sum_{\ell=1}^{n} z_{\ell}^{2} -2 z_k\right)\left(2  z_h \sum_{\ell=1}^{n} \ov z_{\ell}^{2} -2 \ov z_h\right)}{\left(1+\left|\sum_{\ell=1}^{n} z_{\ell}^{2}\right|^{2}-2 \sum_{\ell=1}^{n}\left|z_{\ell}\right|^{2}\right)^\mu
- |w|^2}\\
+&\frac{-\mu\left(1+\left|\sum_{\ell=1}^{n} z_{\ell}^{2}\right|^{2}-2 \sum_{\ell=1}^{n}\left|z_{\ell}\right|^{2}\right)^{\mu-1} \left(4 \ov z_k  z_{h} -2 \delta_{hk} \right)}{\left(1+\left|\sum_{\ell=1}^{n} z_{\ell}^{2}\right|^{2}-2 \sum_{\ell=1}^{n}\left|z_{\ell}\right|^{2}\right)^\mu
- |w|^2}\\
+&\frac{\mu^2\left(1+\left|\sum_{\ell=1}^{n} z_{\ell}^{2}\right|^{2}-2 \sum_{\ell=1}^{n}\left|z_{\ell}\right|^{2}\right)^{2\mu-2}\left(2 \ov z_k \sum_{\ell=1}^{n} z_{\ell}^{2} -2 z_k\right) \left(2  z_h \sum_{\ell=1}^{n} \ov z_{\ell}^{2} -2 \ov z_h\right)}{\left(\left(1+\left|\sum_{\ell=1}^{n} z_{\ell}^{2}\right|^{2}-2 \sum_{\ell=1}^{n}\left|z_{\ell}\right|^{2}\right)^\mu
- |w|^2\right)^2}.
 \end{split}\end{equation*}
 Hence for
% \begin{equation}\label{eqcond4}
 $(z,w)\in  f (M_{\Delta_2}), \ 2 \geq j,h \geq 1 \text{ and } k > 2$,
% \end{equation}
 we have
$$
\Gamma_{0 \hs h}^{k}= \sum_\ell g^{k,\ov \ell}\hs \frac{\de g_{h \hs\ov\ell}}{\de w}= g^{k,\ov k}\hs \frac{\de g_{h \hs\ov k}}{\de w}
$$
and
$$
\Gamma_{j \hs h}^{k}= \sum_\ell g^{k,\ov \ell}\hs \frac{\de g_{h \hs\ov\ell}}{\de z_j}= g^{k,\ov k}\hs \frac{\de g_{k \hs\ov h}}{\de z_j},
$$
where we used that for $(z,w) \in f (M_{\Delta_2}) = \left\{(z,w) \in M_{\Omega_4[n]}\mid z_3=\dots=z_n=0\right\}$, $k>2$ and $k \neq \ell$ we have $g^{k,\ov \ell}(z,w)= 0$. It is straightforward to check that under this conditions $\frac{\de^3\Phi_{\Omega_4,\mu}}{\de w\de z_{h}\de \ov z_{k}}(z,w) =0$ and  $\frac{\de^3\Phi_{\Omega_4,\mu}}{\de z_j\de z_{h}\de \ov z_{k}}(z,w) =0$, namely that
$\Gamma_{j\hs k}^{\ell}(z,w)= 0$. Therefore $ f (M_{\Delta_2})$ is totally geodesic in $M_{\Omega_4[n]}$. The proof is complete.
\endproof

\subsection{The proof of the Hartogs--Polydisk Theorem} 
We need two further preliminary results.
\begin{lemma}\label{lemautlift}
Let  $\Omega$ be a bounded symmetric domain.
\begin{enumerate}\item If $\phi\!: \Omega\f \Omega$ is an isometric automorphism of $\Omega$ then $\phi$ lifts to an isometric automorphism $\tilde \phi :M_\Omega(\mu) \f M_\Omega(\mu)$ defined by 
$$
\tilde \phi (z,w) = \left(\phi(z), e^{\mu h_\phi(z)}{w}\right),
$$
for an appropriate holomorphic function $h_\phi:\Omega\f \C$.
\item If $\phi\!: \Omega\f \Omega$ is an automorphism of $\Omega$ which fix the origin, then $\phi$ lifts to an isometric automorphism $\tilde \phi :M_\Omega(\mu) \f M_\Omega(\mu)$ defined by 
$$
\tilde \phi (z,w) = \left(\phi(z),w\right).
$$
\end{enumerate}
\end{lemma}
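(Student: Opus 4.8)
The plan is to read off how $\tilde\phi$ acts on the potential $\Phi_{\Omega,\mu}$, using the transformation law of the generic norm $N_\Omega$ under automorphisms; since the \Ko\ metric $\omega(\mu)$ is the curvature form $\frac i2\de\deb\Phi_{\Omega,\mu}$, isometry is equivalent to $\Phi_{\Omega,\mu}\circ\tilde\phi$ differing from $\Phi_{\Omega,\mu}$ by a pluriharmonic function.

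For part (1), first I would recall the transformation law of the Bergman kernel, $K_\Omega(\phi(z),\phi(z))\,|J_\phi(z)|^2 = K_\Omega(z,z)$, where $J_\phi$ is the holomorphic Jacobian determinant of $\phi$, together with the fact that $K_\Omega(z,z)$ is a constant multiple of $N_\Omega(z,z)^{-\gamma}$, $\gamma$ being the genus. Combining these yields
$$
N_\Omega\big(\phi(z),\phi(z)\big)^{\gamma} = |J_\phi(z)|^{2}\,N_\Omega(z,z)^{\gamma}.
$$
Since $\Omega$ is contractible and $J_\phi$ is nowhere vanishing, I can fix a branch of the logarithm and define the holomorphic function $h_\phi(z) := \tfrac{1}{\gamma}\log J_\phi(z)$, which satisfies $|e^{h_\phi(z)}|^{2} = |J_\phi(z)|^{2/\gamma}$ and hence $N_\Omega(\phi(z),\phi(z)) = |e^{h_\phi(z)}|^{2}\,N_\Omega(z,z)$. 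This is the function appearing in the statement.

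Next I would verify that $\tilde\phi(z,w) = (\phi(z), e^{\mu h_\phi(z)}w)$ is a biholomorphism of $M_\Omega(\mu)$ onto itself: it is holomorphic and linear in the $w$-fiber, and the defining inequality is preserved because $|e^{\mu h_\phi(z)}w|^{2} = |e^{\mu h_\phi(z)}|^{2}|w|^{2} < |e^{\mu h_\phi(z)}|^{2}N_\Omega^{\mu}(z,z) = N_\Omega^{\mu}(\phi(z),\phi(z))$. A direct substitution then gives
$$
\Phi_{\Omega,\mu}\big(\tilde\phi(z,w)\big) = -\log\!\big(|e^{\mu h_\phi(z)}|^{2}(N_\Omega^{\mu}(z,z)-|w|^{2})\big) = \Phi_{\Omega,\mu}(z,w) - 2\mu\operatorname{Re}h_\phi(z).
$$
As $\operatorname{Re}h_\phi$ is pluriharmonic, $\de\deb(\Phi_{\Omega,\mu}\circ\tilde\phi) = \de\deb\Phi_{\Omega,\mu}$, so $\tilde\phi^{*}\omega(\mu) = \omega(\mu)$ and $\tilde\phi$ is an isometric automorphism.

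For part (2), the point is that when $\phi(0)=0$ no correction factor is needed. In the Harish--Chandra realization the isotropy subgroup of $\Aut(\Omega)$ at the origin acts linearly, so $J_\phi$ equals the constant $\det\phi$; evaluating the displayed transformation law at $z=0$ and using $N_\Omega(0,0)=1$ forces $|J_\phi|=1$, whence $N_\Omega(\phi(z),\phi(z)) = N_\Omega(z,z)$. Therefore $(z,w)\mapsto(\phi(z),w)$ preserves the inequality defining $M_\Omega(\mu)$ and satisfies $\Phi_{\Omega,\mu}(\phi(z),w) = \Phi_{\Omega,\mu}(z,w)$ identically, so it is an isometric automorphism. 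The main obstacle is the careful justification of the two structural inputs—the transformation law of $N_\Omega$ through the Bergman kernel (with the invariance $N_\Omega(\phi(z),\phi(z))=N_\Omega(z,z)$ under the isotropy group as its special case) and the existence of a global branch of $\log J_\phi$ on $\Omega$—after which both parts reduce to the elementary substitutions above.
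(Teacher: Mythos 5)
Your proof is correct in substance but takes a genuinely different route from the paper's. The paper obtains $h_\phi$ abstractly: since $\phi$ preserves $\omega_{hyp}=-\partial\bar\partial\log N_\Omega$, the difference $\log N_\Omega(\phi(z),\overline{\phi(z)})-\log N_\Omega(z,\bar z)$ is pluriharmonic on the simply connected domain $\Omega$, hence equals $h_\phi(z)+\overline{h_\phi(z)}$ for some holomorphic $h_\phi$; the subsequent verification that $\tilde\phi$ preserves the defining inequality and changes the potential only by a pluriharmonic term is then the same computation you perform. For part (2) the paper simply cites the fact that origin-fixing automorphisms preserve the generic norm, whereas you rederive it from Cartan's linearity theorem for circled domains together with the Jacobian transformation law. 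What your route buys is an explicit formula $h_\phi=\gamma^{-1}\log J_\phi$, which the paper's existence argument does not provide. One caveat, though: your key identity $K_\Omega=c\,N_\Omega^{-\gamma}$ holds for \emph{irreducible} $\Omega$, while the lemma --- under the paper's conventions --- allows $\Omega=\Omega_1\times\cdots\times\Omega_m$ with factors of generally different genera $\gamma_j$; there $K_\Omega=\prod_j c_j N_{\Omega_j}^{-\gamma_j}$ is not a power of $N_\Omega$, so your single-exponent transformation law fails as stated. The repair is routine: up to a permutation of isomorphic factors (which leaves $N_\Omega$ invariant), any automorphism splits as $\phi=(\phi_1,\dots,\phi_m)$, and applying your argument factor-by-factor with $h_\phi=\sum_j\gamma_j^{-1}\log J_{\phi_j}$ gives $N_\Omega(\phi(z),\phi(z))=\left|e^{h_\phi(z)}\right|^2 N_\Omega(z,z)$ in full generality. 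The paper's pluriharmonicity argument sidesteps this decomposition entirely and works uniformly, which is why it is shorter, at the price of a non-explicit $h_\phi$.
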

\begin{proof}
Let $\phi:\Omega\f\Omega$ be an isometric automorphism of $\Omega$. Then $\phi$ satisfies: 
$$
\de\deb \log \left(N \left(\phi(z),\ov {\phi( z)}\right)\right) = \de\deb \log \left(N \left(z,\ov z\right)\right),
$$ 
and hence $N \left(\phi(z),\ov {\phi( z)}\right) = N \left(z,\ov z\right) e^{h_\phi(z)+\ov h_\phi(z)}$ for an opportune holomorphic function $h_\phi:\Omega\f \C$. The holomorphic map $f:M_{\Omega} (\mu)\f M_\Omega (\mu)$ defined by:
$$
f(z,w)=\left(\phi(z),e^{\mu h_\phi(z)}w\right),
$$
is well defined, as $\left|e^{\mu h_\phi(z)}w\right|^2<\left|e^{\mu h_\phi(z)}\right|^2 N \left(z,\ov z\right) = N \left(\phi(z),\ov {\phi( z)}\right)$, and it is an isometry of $M_\Omega(\mu)$, since:
$$
\de\deb \log \left(N^\mu \left(\phi(z),\ov {\phi( z)}\right)-\left|e^{\mu h_\phi(z)}w\right|^2\right) = \de\deb \log \left(N ^\mu\left(z,\ov z\right)-|w|^2\right). 
$$
For the second part, it is enough to recall that automorphisms of $\Omega$ that fix the origin preserves the minimal polynomial $N_\Om$ (see e.g \cite[Prop. III.2.7]{Bert} or \cite[Section 2.2]{MZ}), thus in this case $h_\phi=0$.
\end{proof}

\begin{prop}\label{corpolgto} 
Let $\Delta^r \subset {\Omega}$ be an  $r$-dimensional totally geodesic polydisk of a bounded symmetric domain of classical type of rank $r$. Then
\begin{equation*}%\label{eqincl}
C_{\Delta^r}=\left\{(z,w)\in M_{\Omega}(\mu) \mid z\in \Delta ^r\right\}
\end{equation*}
is  a totally geodesic  K\"ahler submanifold of $M_{\Omega}(\mu)$ biholomorphically isometric to $M_{\Delta^r}(\mu)$.
\end{prop}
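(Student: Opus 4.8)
The plan is to reduce the statement, by means of a lifted automorphism, to the distinguished polydisk through the origin already treated in Lemmas \ref{lempolytot}--\ref{lempolytot4}, and then to transport the conclusion. Write $\Omega=\Omega_1\times\cdots\times\Omega_m$ as a product of classical Cartan domains, so that $r=r_1+\cdots+r_m$, and let $\Pi_0=\varphi(\Delta^r)$ be the distinguished totally geodesic polydisk through the origin, where $\varphi=\varphi_1\times\cdots\times\varphi_m$ and each $\varphi_i$ is the embedding constructed in Section \ref{cartan}. Since $N_\Omega=N_{\Omega_1}\cdots N_{\Omega_m}$ and the diastasis of a product is the sum of the diastases of the factors, the volume argument of Lemma \ref{ki} carries over verbatim and yields $N_\Omega(\varphi(z),\varphi(z))=N_{\Delta^r}(z,z)$; hence $f(z,w)=(\varphi(z),w)$ is a K\"ahler immersion of $M_{\Delta^r}(\mu)$ with image exactly $C_{\Pi_0}=\{(z,w)\in M_\Omega(\mu)\mid z\in\Pi_0\}$, and, $\varphi$ being injective, $f$ realizes a biholomorphic isometry $M_{\Delta^r}(\mu)\simeq C_{\Pi_0}$.

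Next I would show that $C_{\Pi_0}$ is totally geodesic. When $\Omega$ is irreducible this is precisely the content of Lemma \ref{lempolytot}, \ref{lempolytot2}, \ref{lempolytot3} or \ref{lempolytot4}, according to the type. For a genuine product I would use a fixed-point description that treats all cases uniformly: each distinguished polydisk $\varphi_i(\Delta^{r_i})$ is the common fixed-point set of a finite family of origin-fixing isometries of $\Omega_i$ (for type IV this is the involution displayed in Section \ref{fourthdomain}; for the matrix types one uses suitable sign-change isometries $Z\mapsto UZV$, resp. $Z\mapsto UZU^{T}$, with diagonal unitary matrices, chosen so as to cut out exactly the distinguished entries). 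By Lemma \ref{lemautlift}(2) each such isometry, fixing $0$, lifts to an isometry $(z,w)\mapsto(\sigma(z),w)$ of $M_\Omega(\mu)$, and the common fixed-point set of the resulting family is exactly $C_{\Pi_0}$. Being the connected fixed-point set of a family of isometries through $(0,0)$, $C_{\Pi_0}$ is a totally geodesic K\"ahler submanifold of $M_\Omega(\mu)$. (Alternatively one may repeat the Christoffel-symbol computation of Lemmas \ref{lempolytot}--\ref{lempolytot4} using $N_\Omega=\prod_i N_{\Omega_i}$.)

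Finally, for an arbitrary totally geodesic polydisk $\Delta^r\subset\Omega$ of maximal rank $r$, the classical Polydisk Theorem furnishes $\gamma\in\Aut(\Omega)$ with $\gamma(\Pi_0)=\Delta^r$. Being holomorphic, $\gamma$ is an isometric automorphism, so by Lemma \ref{lemautlift}(1) it lifts to an isometric automorphism $\tilde\gamma(z,w)=(\gamma(z),e^{\mu h_\gamma(z)}w)$ of $M_\Omega(\mu)$. Since the first coordinate of $\tilde\gamma$ is $\gamma$ and $e^{\mu h_\gamma(z)}\neq 0$, one checks directly that $\tilde\gamma(C_{\Pi_0})=C_{\Delta^r}$. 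As $\tilde\gamma$ is a biholomorphic isometry of $M_\Omega(\mu)$, it carries the totally geodesic K\"ahler submanifold $C_{\Pi_0}$ onto $C_{\Delta^r}$, which is therefore totally geodesic and, through $\tilde\gamma\circ f$, biholomorphically isometric to $M_{\Delta^r}(\mu)$, as claimed.

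I expect the main obstacle to be the product case of the middle step. The Kobayashi potential $-\log\!\big(\prod_i N_{\Omega_i}^\mu-|w|^2\big)$ couples all factors through the single fibre variable $w$, so the metric of $M_\Omega(\mu)$ is not a Riemannian product and the irreducible computations do not simply superpose; the fixed-point-set description is what makes the argument uniform, and the delicate point there is to verify that the chosen sign-change isometries cut out precisely the distinguished polydisk in each matrix factor (in particular, for type II the signs must be taken constant on the matched pairs of indices).
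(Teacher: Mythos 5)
Your proposal is correct, and its skeleton coincides with the paper's: both identify $C_{\Pi_0}\simeq M_{\Delta^r}(\mu)$ through the map $f(z,w)=(\varphi(z),w)$ (the paper via Lemma \ref{ki} together with $N_{\Delta^r}=N_{\Omega\mid_{\Delta^r}}$), and both transport the conclusion to an arbitrary maximal polydisk using the lifted automorphisms of Lemma \ref{lemautlift} and the transitivity part of the classical Polydisk Theorem; your verification that $\tilde\gamma(C_{\Pi_0})=C_{\Delta^r}$ (using that $e^{\mu h_\gamma}$ never vanishes and that the defining inequality is preserved) is exactly what the paper's appeal to Lemma \ref{lemautlift}(1) amounts to. Where you genuinely diverge is the central step, the total geodesy of $C_{\Pi_0}$: the paper obtains it from the case-by-case Christoffel-symbol computations of Lemmas \ref{lempolytot}, \ref{lempolytot2}, \ref{lempolytot3} and \ref{lempolytot4}, whereas you replace these by realizing $\Pi_0$ as the common fixed-point set of origin-fixing sign-change isometries of $\Omega$ (with the correct caveat for type II that the signs be constant on matched index pairs $\{j,n+1-j\}$, and with the involution of Section \ref{fourthdomain} for type IV, where the paper itself uses this device, but only at the level of the base domain), lifting them by Lemma \ref{lemautlift}(2) to isometries $(z,w)\mapsto(\sigma(z),w)$ of $M_\Omega(\mu)$ whose common fixed-point set is precisely $C_{\Pi_0}$, and invoking the standard fact that connected components of fixed-point sets of isometries are totally geodesic. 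Your route buys two things: a uniform, essentially computation-free treatment of all four classical types, and, more significantly, an explicit argument in the reducible case $\Omega=\Omega_1\times\cdots\times\Omega_m$, which the proposition's hypothesis allows but which the paper's citation of the four irreducible lemmas leaves implicit; your observation that the potential $-\log\bigl(\prod_i N_{\Omega_i}^{\mu}-|w|^2\bigr)$ couples the factors through the fibre variable $w$, so that $M_\Omega(\mu)$ is not a metric product and the irreducible computations do not simply superpose, pinpoints exactly why some such uniform argument is needed there. What the paper's computational route buys in exchange is explicit quantitative information (the vanishing of the relevant Christoffel symbols along $C_{\Pi_0}$), in the same style as the direct computation the authors carry out again in Lemma \ref{lemtotghp}.
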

\proof By $(1)$ of Lemma \ref{lemautlift} we can assume without loss of generality that $\Delta^r$ passes through the origin. Observe that $N_{\Delta^r}=N_{\W{\mid_{\Delta^r} }}$ (see \cite[Proposition VI.2.4  and  VI.3.6]{roos}). Now the proof is an immediate consequence of lemmata \ref{lempolytot}, \ref{lempolytot2}, \ref{lempolytot3}, \ref{lempolytot4}, \ref{lemautlift} and the Polydisk Theorem that assure us that $\Aut_0 (\Omega)$ acts transitively on the set of the $r$-dimensional totally geodesic polydisk through the origin of $\W$ (see also \cite[Theorem VI.3.5]{roos}).
\endproof

\begin{proof}[Proof of Theorem \ref{thmpoly}]
Let $X \in T_{(z,w)}M_ \Omega(\mu)$ be a fixed tangent vector.   
Consider the decomposition $X=X_1+X_2$, where $X_1\in T_z\Om$ and $X_2\in \C$. From the Polydisk Theorem we know that there exists a totally geodesic polydisk $\Delta^r\subset \Om$, through $z$, such that $X_1\in  T_z \Delta^r$.  By Proposition \ref{corpolgto} we know that 
$
\left\{(z,w)\in M_{\Omega}(\mu) \mid z\in \Delta ^r\right\}
$
is the Cartan-Hartogs $M_{\Delta^r}(\mu)$ realized as a totally geodesic  K\"ahler submanifold of $M_{\Omega}(\mu)$. The proof is complete by observing that by construction $X \in T_0 \Delta^r \times \C \cong  T_{(z,w)} M_{\Delta^r}(\mu)$. 
\end{proof}

\section{Proof of Theorem \ref{thmtotg}}
In order to proof Theorem \ref{thmtotg} we need the following lemma, which generalize Proposition \ref{corpolgto} to polydisks of dimension less than the rank of $\Omega$.
\begin{lemma}\label{lemtotghp}
Let $\Delta ^r \subset \Delta^n$ be a totally geodesic $r$-dimensional polydisk of an $n$-dimensional polydisk. Then
\begin{equation}\label{eqincl}
%C_{\Delta^r}=
\left\{(z,w)\in M_{\Delta^n}(\mu) \mid z\in \Delta ^r\right\}
\end{equation}
is  a totally geodesic  K\"ahler submanifold of $M_{\Delta^n}(\mu)$ biholomorphic isometric to $M_{\Delta^r}(\mu)$.
\end{lemma}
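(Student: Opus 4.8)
The plan is to follow the scheme of Proposition \ref{corpolgto}: reduce the arbitrary totally geodesic polydisk to a coordinate one by means of the automorphism lifts of Lemma \ref{lemautlift}, and then realize the resulting set \eqref{eqincl} as the fixed-point locus of an isometric involution of $M_{\Delta^n}(\mu)$.

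First I would normalize the position of $\Delta^r$. By part $(1)$ of Lemma \ref{lemautlift}, any $\phi\in\Aut(\Delta^n)$ moving a point of $\Delta^r$ to the origin lifts to an isometric biholomorphism $\tilde\phi$ of $M_{\Delta^n}(\mu)$ carrying \eqref{eqincl} onto the analogous set for $\phi(\Delta^r)$; since isometries preserve totally geodesic polydisks, we may assume $\Delta^r$ passes through $0$. I would then appeal to the structure theory underlying the Polydisk Theorem (frames of minimal tripotents of the rank-$n$ domain $\Delta^n$, see \cite{roos,loos}): an $r$-dimensional totally geodesic polydisk through the origin that is \emph{isometric to the standard} $\Delta^r$ is spanned by an $r$-element sub-frame of a maximal frame, and $\Aut_0(\Delta^n)$ acts transitively on such sub-frames. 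Hence, up to an element $\psi\in\Aut_0(\Delta^n)$, we may take $\Delta^r$ to be the coordinate polydisk $\Delta^r_0=\{z_{r+1}=\cdots=z_n=0\}$; by part $(2)$ of Lemma \ref{lemautlift} the lift $\tilde\psi(z,w)=(\psi(z),w)$ is an isometry of $M_{\Delta^n}(\mu)$ mapping \eqref{eqincl} onto the corresponding set for $\Delta^r_0$, so it suffices to treat $\Delta^r_0$.

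For $\Delta^r_0$ set $\varphi(z_1,\dots,z_r)=(z_1,\dots,z_r,0,\dots,0)$. Since the last $n-r$ factors reduce to $1$,
$$
N_{\Delta^n}(\varphi(z),\varphi(z))=\prod_{j=1}^r(1-|z_j|^2)=N_{\Delta^r}(z,z),
$$
so $f(z,w)=(\varphi(z),w)$ satisfies $\Phi_{\Delta^n,\mu}(f(z,w))=-\log\!\big(N_{\Delta^r}^\mu(z,z)-|w|^2\big)=\Phi_{\Delta^r,\mu}(z,w)$. Thus $f$ is a biholomorphism of $M_{\Delta^r}(\mu)$ onto the set \eqref{eqincl} pulling back the K\"ahler potential, hence a biholomorphic isometry, by the same reasoning as in Lemma \ref{ki}. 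To obtain that this image is totally geodesic I would note that
$$
\{(z,w)\in M_{\Delta^n}(\mu)\mid z_{r+1}=\cdots=z_n=0\}
$$
is precisely the fixed-point set of $\sigma(z_1,\dots,z_n,w)=(z_1,\dots,z_r,-z_{r+1},\dots,-z_n,w)$, which is an isometry of $M_{\Delta^n}(\mu)$ because $\Phi_{\Delta^n,\mu}$ depends on $z$ only through the moduli $|z_j|^2$; as the fixed-point set of an isometry is totally geodesic, the conclusion follows (compare the argument for the fourth type in Lemma \ref{lempolytot4}).

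The step I expect to be the main obstacle is the reduction to coordinate form: one must be sure that the hypothesis forces $\Delta^r$ into a coordinate sub-polydisk rather than a diagonally embedded one. Indeed, a subspace such as $\C\cdot(1,\dots,1)$ is totally geodesic, but the generic norm restricts to $(1-|z|^2)^n$, so it is isometric to a disk carrying a rescaled metric and is \emph{not} a polydisk in the sense required by the conclusion; ruling such subspaces out is exactly what the transitivity of $\Aut_0(\Delta^n)$ on sub-frames accomplishes, and some care is needed to invoke it with the correct normalization.
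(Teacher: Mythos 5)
Your proof is correct, and it differs from the paper's in the key step. The paper handles the normalization by a curvature argument: it applies the holomorphic sectional curvature formula for products (\cite[Prop. IX.9.2]{KN2}) to each $\mathds C{\rm H}^1$ factor of the embedded $\Delta^r$, forcing each factor direction along a single coordinate axis and hence reducing, without loss of generality, to the coordinate polydisk $\{z_j=0,\ j>r\}$; this curvature identity is exactly what substantiates the Jordan-frame transitivity claim you invoke via \cite{roos,loos} but do not prove, and it is also what disposes of the diagonal disk $\C\cdot(1,\dots,1)$ that you correctly flag as the main pitfall (its induced potential is $-n\log(1-|z|^2)$, so the curvature mismatch rules it out) --- so your ``main obstacle'' is real but resolvable, and the paper resolves it by precisely the computation you sketch in your caveat. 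Where you genuinely diverge is in proving that the coordinate Hartogs polydisk is totally geodesic: the paper computes the metric coefficients $g_{k\bar\ell}$ and $g_{0\bar\ell}$ of $\Phi_{\Delta^n,\mu}$ explicitly and verifies that the Christoffel symbols $\Gamma_{ij}^k$ vanish for $0\le i,j\le r$ and $k>r$ along $\{z_{r+1}=\cdots=z_n=0\}$, whereas you exhibit that set as the fixed-point locus of the holomorphic isometry $\sigma(z,w)=(z_1,\dots,z_r,-z_{r+1},\dots,-z_n,w)$ --- legitimate, since $\Phi_{\Delta^n,\mu}$ depends only on the moduli $|z_j|^2$, $|w|^2$, so $\sigma$ preserves both the domain and the metric, and fixed-point sets of isometries are totally geodesic; this is the same trick the paper itself uses for the fourth-type \emph{base} domain in Section \ref{fourthdomain}, here transplanted to the Hartogs level. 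Your reflection argument is shorter and computation-free; the paper's Christoffel computation is more pedestrian but self-contained. The identification with $M_{\Delta^r}(\mu)$ via restriction of the generic norm and matching potentials is the same in both (as in Lemma \ref{ki}), and your use of Lemma \ref{lemautlift} (parts (1) and (2)) to carry the set \eqref{eqincl} along the normalizing automorphisms is a valid, slightly more explicit version of the paper's ``without loss of generality.''
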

\proof
Let us first show that the inclusion $i_j:\C {\rm H}^1\f \Delta^n$ of $\C {\rm H}^1$ in the  $j$-th factor of  $\Delta^r$, 
is a holomorphic and totally geodesic immersion of $\C {\rm H}^1$ in $ \Delta^n$. Let us denote by  $ K^{\C {\rm H}^1}$ and $K^{\Delta^n}$ the holomorphic sectional curvatures  of ${\C {\rm H}^1}$ and ${\Delta^n}$ respectively. We have (see \cite[Propostion IX.9.2]{KN2}),
$$
K^{\C {\rm H}^1} (X) = K^{\Delta^n} ({i_j}_* X ) 
  =  \sum_{\ell = 1}^n  K^{\Delta^n}\!\!\left(a_\ell \frac{\de}{\de z_\ell} \right),\qquad \forall X\in T_z\C {\rm H}^1,
$$
where ${i_j}_{*}(X)=\sum_{\ell = 1}^n a_\ell \hs \frac{\de}{\de z_\ell} $.  We conclude that all but one of the $a_1,\dots,a_n$  are forced to be zero.  
We can therefore assume, without loss of generality, that $\C{\rm H}^1\times\cdots\times\C{\rm H}^1=\Delta^r= \left\{z \in \Delta^n \mid z_{j}=0,\ j>r\right\}$.  
Clearly $\left(z_1,\dots,z_r,w\right)\xmapsto{f} \left(z_1,\dots,z_r,0,\dots,0,w\right)$ defines an holomorphic isometric immersion of $M_{\Delta^r}(\mu)$ in $M_{\Delta^n}(\mu)$, in order to complete the proof of the lemma we are going to prove that it is also totally geodesic. 

 Let $\nabla$ be the Levi-Civita connection  of $M_{\Delta^n} (\mu)$, let us denote $\frac{\de}{\de z_0}=\frac{\de}{\de w}$ and let $\Gamma_{ij}^k$ be the associated Christoffel symbols defined by $\nabla_{\frac{\de}{\de z_i}}\frac{\de}{\de z_j}=\sum_{k=0}^n \Gamma_{ij}^k \frac{\de}{\de z_k}$. In order to prove that $f$ is totally geodesic we need to show that $\Gamma_{ij}^k=0$ for $0\leq i,j \leq r$ and  $k > r$. For $k , \ell > 0$ and  $k  \neq \ell$, we have
\begin{equation*}\begin{split} 
g_{k \ov \ell} &=- \frac{i}{2} \de_{z_k} \de_{\ov z_\ell}\log\left(  \prod_{j=1}^n(1-|z_j|^2)^\mu-|w|^2\right)\\
& =   \frac{i}{2} \de_{z_k} \frac{ \mu\hs z_\ell \hs (1-|z_\ell|^2)^{\mu-1} \prod_{j=1, j\neq \ell}^n(1-|z_j|^2)^\mu}{\prod_{j=1}^n(1-|z_j|^2)^\mu-|w|^2}\\
& =   \frac{i}{2} \frac{\mu^2\hs z_\ell \hs \ov z_k \hs (1-|z_\ell|^2)^{\mu-1}(1-|z_k|^2)^{\mu-1} \prod_{j=1, j\neq \ell}^n(1-|z_j|^2)^\mu\prod_{j=1, j\neq k}^n(1-|z_j|^2)^\mu}{ \left(\prod_{j=1}^n(1-|z_j|^2)^\mu-|w|^2\right)^2}\\
& - \frac{i}{2} \frac{ \mu^2\hs z_\ell \hs \ov z_k \hs (1-|z_\ell|^2)^{\mu-1} (1-|z_k|^2)^{\mu-1} \prod_{j=1, j\neq \ell,k}^n(1-|z_j|^2)^\mu}{\prod_{j=1}^n(1-|z_j|^2)^\mu-|w|^2}
\end{split}\end{equation*}
and
\begin{equation*}\begin{split} 
g_{0 \ov \ell} &=- \frac{i}{2} \de_{w} \de_{\ov z_\ell}\log\left(  \prod_{j=1}^n(1-|z_j|^2)^\mu-|w|^2\right)\\
& =   \frac{i}{2} \de_{w} \frac{ \mu\hs z_\ell \hs (1-|z_\ell|^2)^{\mu-1} \prod_{j=1, j\neq \ell}^n(1-|z_j|^2)^\mu}{\prod_{j=1}^n(1-|z_j|^2)^\mu-|w|^2}\\
& =    \frac{i}{2}  \frac{\ov w \hs \mu\hs z_\ell \hs (1-|z_\ell|^2)^{\mu-1} \prod_{j=1, j\neq \ell}^n(1-|z_j|^2)^\mu}{\prod_{j=1}^n(1-|z_j|^2)^\mu-|w|^2}.
\end{split}\end{equation*}
Therefore, for $k> r$ and $z_k=0$, we get
$$
\Gamma_{ij}^k = \sum _ \ell g_{k \ov \ell} \hs\frac{\de g _{j\ell}}{\de z _i} = g_{k \ov k} \hs\frac{\de g _{j \ov k }}{\de z _i} =0,
 $$
for any $i,j \neq k$. The proof is complete.
\endproof

\begin{proof}[Proof of Theorem \ref{thmtotg}]
As $\W'$ is a totally geodesic \K\ submanifold of the bounded symmetric domain $\W$, it is an HSSNCT and therefore can be realized as a bounded symmetric domain $\W'\subset \C^m$, where $m=\dim (\W')$. With a slight abuse of notation, let us denote by $f: \W'\subset \C^m \f \W'\subset \C^n$ the totally geodesic  K\"ahler immersion  of $\W'$ in $\W$. Without loss of generality (up to automorphisms of ${\Omega}$ and $\W '$) we can assume $f(0)=0$. Once observed that $N_{\W'} = N_{\W_{\mid \W '}}$  it is easy to verify that $\tilde f:M_{\W'}(\mu)\f M_{\W}(\mu)$ given by 
$
\tilde f (z,w) = (f(z),w)  
$
defines a K\"ahler embedding, with $C_{\W'}=f\left(M_{\W'}(\mu)\right)\simeq M_{\Omega'}(\mu)$.

It remains to prove that $C_{\W'}$ is totally geodesic in $M_{ \Om }(\mu)$. Let $p\in C_{\W'} \subset M_{ \Om }(\mu)$ and let $X\in T_p C_{\W'} \subset T_{p}M_{\Omega}(\mu)$. We want to prove that  the geodesic $\gamma$ of $M_{ \Om }(\mu)$ with $\gamma(0)=p$ and $\gamma'(0)=X$ is also a geodesic of $C_{\W'}$.   By Theorem \ref{thmpoly} and Proposition \ref{corpolgto}, we know that there exist t.g. \K\ immersed polydisks $\Delta^{r'} \subset \Om '$ and $\Delta^{r} \subset \Om $ such that the associated Hartogs-Polydisk $C_{\Delta^{r'}}\subset M_{ \Om' }(\mu)$ and $C_{\Delta^{r}}\subset M_{ \Om}(\mu)$ are totally geodesics (here $r'$ and $r$ are the ranks of $\W$ and $\W$ respectively).
Using similar argument to that used in first part of the proof of Lemma \ref{lemtotghp} we can see that $\Delta' \cap \Delta$ is a t.g. polydisk of  $\Omega '$ (and therefore of $\Omega$). By Lemma \ref{lemtotghp} we conclude that $C_{\Delta' \cap \Delta}=\left\{(z,w)\in M_{\W}(\mu)\right\}$ is a totally geodesic \K\ submanifold of $C_{\W'}$ and $M_{\W}(\mu)$ at the same time. It is a simple observation that $p\in C_{\Delta' \cap \Delta}$ and $X\in T_pC_{\Delta' \cap \Delta}$, hence $\gamma$ is a geodesic of $M_{\W'}(\mu)$ as wished. 
\end{proof}

\section{Proof of Theorem \ref{autlingeo}}
We start this section giving the explicit expression of a holomorphic and isometric immersion $f$ of $(M_{\Delta^r}(\mu),\omega_{\Delta^r}(\mu))$ in $(l^2(\mathds C),\omega_0)$. 
\begin{lemma}\label{kahler}
The holomorphic map $f\!:M_{\Delta^r}(\mu)\rightarrow l^2(\mathds C)$ given by:
$$
f(z,w)=\left(\psi_1,\dots, \psi_r,\Psi\right),
$$
where for $j=1,\dots, r$:
\begin{equation}\label{psij}
\psi_j:=\sqrt\mu\left(z_j,\dots, \frac{z_j^k}{\sqrt k},\dots\right),
\end{equation}
\begin{equation}\label{Psi}
\Psi:=\left(\dots, \frac1{\sqrt a}\sqrt{{\mu a +k_1-1\choose k_1}\cdots{\mu a +k_r-1\choose k_1}}z_1^{k_1}\dots z_r^{k_r}w^a,\dots\right),
\end{equation}
for $k=(k_1,\dots, k_r)$, $|k|=0,1,2,\dots$, and $a=1,2,\dots$, satisfies $f^*\omega_0=\omega_{\Delta^r}(\mu)$.
\end{lemma}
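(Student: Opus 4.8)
The plan is to exploit the fact that the flat form $\omega_0$ on $l^2(\mathds C)$ admits the global K\"ahler potential $\|Z\|^2=\sum_k|Z_k|^2$. For any holomorphic map $f$ one then has $f^*\omega_0=\frac i2\partial\bar\partial\|f(z,w)\|^2$, so it suffices to show that the squared norm $\|f\|^2$ equals the potential $\Phi_{\Delta^r,\mu}$ of $\omega_{\Delta^r}(\mu)$; this immediately gives $f^*\omega_0=\frac i2\partial\bar\partial\Phi_{\Delta^r,\mu}=\omega_{\Delta^r}(\mu)$. As a by-product the same computation shows that the series defining $f$ converge on $M_{\Delta^r}(\mu)$, so that $f$ is indeed well defined with values in $l^2(\mathds C)$.

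First I would treat the ``base'' part $\sum_{j=1}^r\|\psi_j\|^2$. From \eqref{psij} one has $\|\psi_j\|^2=\mu\sum_{k\geq 1}|z_j|^{2k}/k=-\mu\log(1-|z_j|^2)$, using $-\log(1-x)=\sum_{k\geq1}x^k/k$ for $|x|<1$. Summing over $j$ yields $\sum_{j=1}^r\|\psi_j\|^2=-\log\prod_{j=1}^r(1-|z_j|^2)^\mu$.

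Next I would compute the ``fiber'' part $\|\Psi\|^2$. Grouping the terms of \eqref{Psi} by the power $a$ of $w$, the coefficient factorizes across the variables $z_1,\dots,z_r$, and the generalized binomial identity $\sum_{k\geq0}\binom{\alpha+k-1}{k}x^k=(1-x)^{-\alpha}$ (applied with $\alpha=\mu a$ and $x=|z_j|^2$) gives $\sum_{k}\prod_{j=1}^r\binom{\mu a+k_j-1}{k_j}|z_j|^{2k_j}=\prod_{j=1}^r(1-|z_j|^2)^{-\mu a}$. Setting $t:=|w|^2\big/\prod_{j=1}^r(1-|z_j|^2)^\mu$, which satisfies $0\leq t<1$ precisely on $M_{\Delta^r}(\mu)$, the remaining sum over $a$ collapses via $-\log(1-t)=\sum_{a\geq1}t^a/a$ to $\|\Psi\|^2=-\log(1-t)$.

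Finally I would combine the two pieces: $\|f\|^2=-\log\prod_{j=1}^r(1-|z_j|^2)^\mu-\log(1-t)=-\log\bigl(\prod_{j=1}^r(1-|z_j|^2)^\mu-|w|^2\bigr)=\Phi_{\Delta^r,\mu}(z,w)$, which is exactly the desired potential. The only genuine obstacle is the bookkeeping: one must verify that the generalized binomial and logarithmic series converge and may be freely rearranged on $M_{\Delta^r}(\mu)$ (guaranteed by $|z_j|<1$ and $t<1$) and correctly identify the relevant binomial coefficient identity; the rest is a routine rearrangement.
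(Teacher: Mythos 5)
Your proposal is correct and follows essentially the same route as the paper: both reduce the claim to showing $\sum_j|f_j|^2=\Phi_{\Delta^r,\mu}$, compute the $\psi_j$-part via the logarithmic series $-\log(1-x)=\sum_{k\geq1}x^k/k$, and collapse the $\Psi$-part using the negative binomial series $\sum_{k\geq0}\binom{\mu a+k-1}{k}x^k=(1-x)^{-\mu a}$ followed by the logarithmic series in $t=|w|^2/\prod_j(1-|z_j|^2)^\mu$. Your explicit remarks on convergence, rearrangement, and well-definedness of $f$ (implicit in the paper) are a welcome addition but do not change the argument.
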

\begin{proof}
We have:
$$
\sum_{j=0}^\infty |f_j|^2=\mu\sum_{j}\sum_{k_j}\frac{|z_j|^{2k_j}}{k_j}+\sum_{k,a}{{\mu a +k_1-1\choose k_1}\cdots{\mu a +k_r-1\choose k_r}}|z_1|^{2k_1}\dots |z_r|^{2k_r}\frac{|w|^{2a}}{a},
$$
(to avoid confusion the sums are always taken in the parameters' range) and:
$$
\sum_{j}\sum_{k_j}\frac{|z_j|^{2k_j}}{k_j}=-\sum_{j=1}^r\log(1-|z_j|^2)=-\log\left(\prod_{j=1}^r(1-|z_j|^2)\right),
$$
\begin{equation}
\begin{split}
\sum_{k,a}{{\mu a +k_1-1\choose k_1}\cdots{\mu a +k_r-1\choose k_r}}|z_1|^{2k_1}\dots |z_r|^{2k_r}\frac{|w|^{2a}}{a}=&\sum_{a=1}^\infty\frac{|w|^{2a}}{a\prod_{j=1}^r(1-|z_j|^2)^{\mu a}}\\
=&-\log\left(1-\frac{|w|^{2}}{\prod_{j=1}^r(1-|z_j|^2)^{\mu}}\right),
\end{split}\nonumber
\end{equation}
which imply:
$$
\sum_{j=0}^\infty |f_j|^2=-\log\left(\prod_{j=1}^r(1-|z_j|^2)^{\mu}-|w|^{2}\right),
$$
as requested.
\end{proof}

We use Lemma \ref{kahler} to obtain geodesic equations for $M_{\Delta^r}(\mu)$. From \eqref{psij}, deriving twice $\psi_j(\gamma(t))$ w.r.t. $t$ gives:
$$
\psi_j(\gamma)''=\sqrt\mu\left(\ddot u_j,\dots, \frac{k(u_j^{k-1}\dot u_j)'}{\sqrt k},\dots\right),
$$
and, denoting by $A(\mu,a,k):=\frac1{\sqrt a}\sqrt{{\mu a +k_1-1\choose k_1}\cdots{\mu a +k_r-1\choose k_1}}$, from \eqref{Psi} we get:
%$$
%\Psi(\gamma):=\left(\dots, A(\mu,a,k)u_1^{k_1}\dots u_r^{k_r}v^a,\dots\right).
%$$
$$
\Psi(\gamma)'':=\left(\dots, A(\mu,a,k)(u_1^{k_1}\dots u_r^{k_r}u_w^a)'',\dots\right).
$$

The tangent space $T_{f(\gamma)}f(M_{\Delta^r}(\mu))$ is spanned by
$$
\nabla f(\gamma)=\left(\partial_1 f,\dots, \partial_r f,\partial_wf\right)(\gamma),
$$
and the condition for $\gamma$ to be a geodesic is equivalent to the system:
\begin{equation}\label{geodesic}
\langle f(\gamma)'',\partial_1f\rangle=\dots =\langle f(\gamma)'',\partial_rf\rangle=\langle f(\gamma)'',\partial_wf\rangle=0,
\end{equation}
namely:
\begin{equation}\label{gammaw}
\langle f(\gamma)'',\partial_wf\rangle=\sum_{k,a}aA^2(\mu,a,k)(\bar u_1^{k_1}\dots\bar u_r^{k_r}\bar u_w^a)'' u_1^{k_1}\dots  u_r^{k_r} u_w^{a-1}=0,
\end{equation}
and for $s=1,\dots,r$:
\begin{equation}\label{gammas}
\begin{split}
\langle f(\gamma)'',\partial_sf\rangle=&\mu\sum_{k=1}^\infty u^{k-1}_s(\bar u_s^{k})''+\\
&+\sum_{k,a}k_sA^2(\mu,a,k)(\bar u_1^{k_1}\dots \bar u_r^{k_r}\bar u_w^a)''u_1^{k_1}\dots u_s^{k_s-1}\dots u_r^{k_r}v^{a}=0.
\end{split}
\end{equation}

Let us now prove Theorem \ref{autlingeo}.
\begin{proof}[Proof of Theorem \ref{autlingeo}]
Let $\gamma$ be a geodesic with linear support in $M_\Omega(\mu)$, passing through $(\zeta,0)$ with direction $\xi$. By Lemma \ref{lemautlift} up to automorphisms we can assume $\zeta=0$ and by the Hartogs polydisk Theorem $\gamma$ is contained in an Hartogs polydisk $M_{\Delta^r}(\mu)$ passing through $0$ with direction $\xi$.Then $\gamma$ is a geodesic with linear support passing through the origin in $M_{\Delta^r}(\mu)$ and conclusion follows by Lemma \ref{xizero} below.
\end{proof}

\begin{lemma}\label{xizero}
If $\gamma(t)=(\xi_1v(t),\dots, \xi_rv(t),\xi_0v(t))$ is a geodesic in $M_{\Delta^r}(\mu)$, then either $\gamma\subset \Delta^r=M_{\Delta^r}(\mu)\cap \{w=0\}$ or $\gamma\subset \mathds C{\rm H}^1=M_{\Delta^r}(\mu)\cap \{z=0\}$ or $r=1=\mu$, i.e. $M_{\Delta^r}(\mu)\simeq\mathds C{\rm H}^2$. 
\end{lemma}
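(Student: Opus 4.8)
The plan is to feed the linear ansatz straight into the geodesic system \eqref{geodesic}, that is into \eqref{gammaw} and \eqref{gammas}, and to distil the rigidity from the identities it produces. Put $\xi=(\xi_1,\dots,\xi_r,\xi_0)$ and set $p_j=|\xi_j|^2$ for $j=1,\dots,r$, $p_0=|\xi_0|^2$, $x=|v|^2$; by Lemma \ref{lemautlift} we may assume $\gamma$ issues from the origin, so $v(0)=0$ and $v'(0)\neq 0$. The three alternatives of the statement are exactly $p_0=0$ (giving $\gamma\subset\Delta^r$), $p_1=\dots=p_r=0$ (giving $\gamma\subset\mathds{C}{\rm H}^1$), and the \emph{mixed} case $p_0\neq 0$ together with some $p_j\neq 0$, which is the one to be excluded. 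Substituting $u_j=\xi_jv$ and $u_w=\xi_0v$, every monomial in \eqref{gammaw} and \eqref{gammas} acquires the same factor $(\bar v^{\,n})''v^{\,n-1}$ with $n=|k|+a$, times a weight in the $p_j$.

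The first real step is to resum these series. Using $(\bar v^{\,n})''=n(n-1)\bar v^{\,n-2}(\bar v')^2+n\bar v^{\,n-1}\bar v''$ together with $\bar v^{\,n-1}v^{\,n-1}=x^{\,n-1}$ and $\bar v^{\,n-2}v^{\,n-1}=v\,x^{\,n-2}$, each of \eqref{gammaw} and \eqref{gammas} collapses to the shape
\[
\bar v''\,P'(x)+(\bar v')^2\,v\,P''(x)=0,
\]
where $P(x)=\sum_n c_nx^n$ is the generating function of the weights. Since the $c_n$ are built from the negative-binomial numbers $\binom{\mu a+k_j-1}{k_j}$, the summation $\sum_{k\ge 0}\binom{\mu a+k-1}{k}y^k=(1-y)^{-\mu a}$ (and its $y$-derivative, to absorb the factor $k_s$ occurring in \eqref{gammas}) evaluates everything in closed form. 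Writing $\pi(x)=\prod_{j=1}^r(1-p_jx)$, the $w$-equation yields $F(x)=x/(\pi(x)^\mu-p_0x)$, and the $z_s$-equation for an active index $s$ (after dividing by $\bar\xi_s$) yields $G(x)=\mu\,x\,\pi(x)^\mu/[(1-p_sx)(\pi(x)^\mu-p_0x)]$; as a sanity check $F=x\,e^{h}$, where $h$ is the restriction to the line of the Kähler potential of Lemma \ref{kahler}. For an inactive index the corresponding equation is satisfied identically and contributes no condition.

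Next I extract the rigidity by a Wronskian argument. For $t$ near $0$ one has $v\neq 0$ and $\bar v'\neq 0$, so the two relations $\bar v''F'+(\bar v')^2vF''=0$ and $\bar v''G'+(\bar v')^2vG''=0$ form a homogeneous linear system in $(\bar v'',(\bar v')^2v)$ admitting a nonzero solution; hence $F'G''-F''G'\equiv 0$, which forces $G=\lambda F$ for a constant $\lambda$. Evaluating at $x=0$ gives $\lambda=\mu$, and $G=\mu F$ reduces to the functional identity
\[
\prod_{j=1}^r(1-p_jx)^\mu=1-p_sx,
\]
required for every active index $s$.

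It remains to read the trichotomy off this identity. Outside the two coordinate cases the right-hand side is the same linear factor for every active $s$, forcing all active $p_j$ to coincide, say $p_j=p$; passing to the totally geodesic sub-Hartogs--polydisk spanned by the active directions (Lemma \ref{lemtotghp}) we may assume every direction active, and the identity becomes $(1-px)^{r\mu}=1-px$, i.e.\ $r\mu=1$. For a single active $z$-direction this reads $\mu=1$, realising $M_{\Delta^1}(1)$ as the ball $\mathds{C}{\rm H}^2$ and giving the degenerate alternative $r=1=\mu$ of the statement. The genuine obstacle, I expect, is the closed-form resummation of the middle step — organising the double sum over $(k,a)$ and differentiating the binomial series to absorb the weight $k_s$ of \eqref{gammas} — after which the Wronskian step and the analysis of the functional identity are comparatively short.
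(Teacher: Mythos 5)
Your resummation is correct, and it is worth saying so explicitly: I checked that with the ansatz the equations \eqref{gammaw1} and \eqref{gammas1} do collapse to $\bar v''F'(x)+(\bar v')^2v\,F''(x)=0$ and $\bar v''G'(x)+(\bar v')^2v\,G''(x)=0$ with $F(x)=x/(\pi(x)^\mu-p_0x)$ and $G(x)=\mu x\,\pi(x)^\mu/\left[(1-p_sx)(\pi(x)^\mu-p_0x)\right]$; the low-order Taylor coefficients of $F$ and $G$ reproduce the paper's $\ddot v(0)=0$, \eqref{derprima} and \eqref{muxi}, and your Wronskian step is sound. The genuine gap is in your last paragraph. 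From $\pi^\mu=1-p_sx$ you correctly deduce that the active moduli are equal and $r\mu=1$ (with $r$ now the number of active directions, after your reduction via Lemma \ref{lemtotghp}), but you then only treat the single-active-direction case: the cases $r\geq 2$, $\mu=1/r$ satisfy your functional identity and are nowhere excluded, whereas the lemma asserts $r=1=\mu$. Moreover this gap cannot be closed inside your framework: when $r\mu=1$ with equal moduli one has $\pi(x)^\mu=1-px$ exactly, hence $G\equiv\mu F$, so the whole system \eqref{gammaw}--\eqref{gammas} (the equations for inactive indices being vacuous) degenerates to the single ODE $\bar v''F'(x)+(\bar v')^2v\,F''(x)=0$, which is solvable with $v(0)=0$, $v'(0)=1$ and therefore produces genuine mixed linear geodesics.

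In fact the statement itself fails in exactly these cases, and your method exposes a computational error in the paper's proof. Concretely, in $M_{\Delta^2}(1/2)$ the diagonal $\{z_1=z_2\}$ is the fixed-point set of the isometry $(z_1,z_2,w)\mapsto(z_2,z_1,w)$, hence totally geodesic, and the restricted potential is $-\log(1-|z|^2-|w|^2)$: a linearly embedded $\C{\rm H}^2$ whose radial geodesics yield geodesics $(\xi v(t),\xi v(t),\xi_0v(t))$ with $\xi,\xi_0\neq 0$, contradicting the trichotomy with $r=2$, $\mu=1/2$. The paper excludes these cases at third order by comparing \eqref{finalgeo1} with \eqref{finalgeo2}, but the relevant coefficient is miscomputed: in \eqref{gammaw1} the $a=1$, $|k|=2$ terms carry $\binom{\mu+1}{2}=\mu(\mu+1)/2$ on the diagonal and $\mu^2$ per unordered pair, so the bracket $\mu(\mu-1)\sum_{j}|\xi_j|^4+\mu^2\sum_{j,k}|\xi_j|^2|\xi_k|^2$ should read $\frac{\mu}{2}\sum_{j}|\xi_j|^4+\frac{\mu^2}{2}\big(\sum_{j}|\xi_j|^2\big)^2$; with this correction and \eqref{muxi}, equation \eqref{finalgeo1} becomes identical to \eqref{finalgeo2} and yields only $r\mu=1$, i.e.\ precisely your functional identity. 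So your route (closed-form generating functions plus a Wronskian, versus the paper's order-by-order differentiation, which cannot see beyond the order at which it stops) is genuinely different and in this instance more reliable; but as written your proof overclaims in its final step, and the honest conclusion of your argument is the trichotomy amended by the alternative ``$r\mu=1$ with all active $|\xi_j|$ equal'', to which the lemma's statement should be corrected.
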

\begin{proof}
A geodesic in $M_{\Delta^r}(\mu)$ must satisfy \eqref{gammaw} and \eqref{gammas}. Plugging $\gamma$ respectively into \eqref{gammaw} and \eqref{gammas} gives:
\begin{equation}\label{gammaw1}
\bar \xi_0\sum_{k,a}aA^2(\mu,a,k)|\xi_1|^{2k_1}\cdots|\xi_r|^{2k_r}|\xi_0|^{2a-2}(\bar v(t)^{|k|+a})'' v(t)^{|k|+a-1} =0,
\end{equation}
\begin{equation}\label{gammas1}
\begin{split}
&\mu\bar \xi_s\sum_{k=1}^\infty |\xi_s|^{2(k-1)}v(t)^{k-1}(\bar v(t)^{k})''+\\
&+\bar \xi_s\sum_{k,a}k_sA^2(\mu,a,k)|\xi_1|^{2k_1}\cdots|\xi_s|^{2(k_s-1)}\cdots|\xi_r|^{2k_r}|\xi_0|^{2a}(\bar v(t)^{|k|+a})''v(t)^{|k|+a-1}=0.
\end{split}
\end{equation}
for $s=1,\dots,r$, $|k|=0,1,2,\dots$, and $a=1,2,\dots$. Evaluating at $t=0$ we get:
$$
\xi_0{ \ddot v(0)} =\mu\xi_1{ \ddot v(0)}=\dots=\mu\xi_r{ \ddot v(0)}=0,
$$
and since $\xi_0$, $\xi_j$, $j=1,\dots, r$ cannot be all vanishing, it implies $\ddot v(0)=0$.

Taking into account that $v(0)=\ddot v(0)=0$ and $\dot v(0)=1$, deriving \eqref{gammaw1} and \eqref{gammas1} once with respect to $t$ and evaluating at $t=0$ gives:
\begin{equation}\label{derprima}
\bar \xi_0\left( v'''(0)+2|\xi_0|^{2}+2\mu\sum_{j=1}^r|\xi_j|^2\right)=0=\mu\bar\xi_s \left(v'''(0)+2 |\xi_s|^2+2|\xi_0|^2\right).
\end{equation}
If $\xi_0=0$ or $\xi_s=0$ for all $s=1,\dots, r$ then, since by Theorem \ref{thmtotg} $M_{\Delta^r}(\mu)\cap \{z=0\}$ and $M_{\Delta^r}(\mu)\cap \{w=0\}$ are totally geodesic in $M_{\Delta^r}(\mu)$, $\gamma\subset \Delta^r=M_{\Delta^r}(\mu)\cap \{w=0\}$ or $\gamma\subset \mathds C{\rm H}^1=M_{\Delta^r}(\mu)\cap \{z=0\}$. Thus, assume that $\xi_0\neq 0$ and at least one between the $\xi_j$'s is different from $0$. From \eqref{derprima} we get:
\begin{equation}\label{muxi}
\mu\sum_{ j=1}^r|\xi_j|^2=|\xi_s|^2,  \quad {\textrm{for any}}\ s=1,\dots, r,
\end{equation}
which implies that all the $\xi_s$'s are equal in module, and thus $r\mu =1$.

To conclude that $r=\mu=1$, we need to consider the third order derivative of \eqref{gammaw1} and \eqref{gammas1} evaluated at $t=0$. Observe first that:
$$
 \left[(v(t)^{2})''v(t)\right]'''(0)=26 v'''(0),\quad  \left[(v(t)^{3})''v(t)^{2}\right]'''(0)=36,
$$
and recall that from \eqref{derprima} we get $v'''(0)=-2(|\xi_0|^{2}+|\xi_s|^{2})$.
Deriving three times \eqref{gammaw1} with respect to $t$ and evaluating at $t=0$ we get:
\begin{equation}
\begin{split}
\bar\xi_0\left[v^{(v)}(0)+\left(|\xi_0|^{2}+\mu\sum_{j=1}^r |\xi_j|^{2}\right)\right.&%36\ddot v(0)^2\dot v(0)+
26 v'''(0)+36\left(|\xi_0|^{4}+2\mu |\xi_0|^{2}\sum_{j=1}^r|\xi_j|^2+\right.\\
&\left.\left.+\mu(\mu-1)\sum_{j=1}^r|\xi_j|^4+\mu^2\sum_{j, k=1}^r|\xi_j|^2|\xi_k|^2\right)\right]=0,
\end{split}\nonumber
\end{equation}
which by \eqref{muxi} reads:
\begin{equation}\label{finalgeo1}
\bar\xi_0\left[v^{(v)}(0)-16\left(|\xi_0|^{2}+|\xi_s|^{2}\right)^2+36(\mu-1)|\xi_s|^4\right]=0.
\end{equation}
On the other hand, \eqref{gammas1} gives:
$$
\mu\bar \xi_s\left[v^{(v)}(0)+26\left(|\xi_s|^2+|\xi_0|^2\right)v'''(0)+36\left(|\xi_s|^2+|\xi_0|^2\right)^2+36|\xi_0|^2\left(\mu\sum_{j=1}^r|\xi_j|^2-|\xi_s|^2\right)\right]=0,
$$
i.e.:
\begin{equation}\label{finalgeo2}
\mu\bar \xi_s\left[v^{(v)}(0)-16\left(|\xi_s|^2+|\xi_0|^2\right)^2\right]=0.
\end{equation}
Comparing \eqref{finalgeo1} and \eqref{finalgeo2} we get $\mu=r=1$ and we are done.
\end{proof}

\end{document}